\DeclareMathOperator{\Disc}{Disc}
\DeclareMathOperator{\TSRI}{\mathrm{TSRI}}
\newcommand{\F}{\mathbb F}
\newcommand{\barF}{\overline{\mathbb F}}
\newcommand{\GL}{\mathrm{GL}}
\newcommand{\hide}[1]{}
\newtheorem{dummy}{Dummy}
\newtheorem{lemma}[dummy]{Lemma}
\newtheorem{theorem}[dummy]{Theorem}
\theoremstyle{definition}
\theoremstyle{remark}
\newtheorem{rem}[dummy]{Remark}
\begin{document}

\bibliographystyle{amsalpha}
\author{Sandro Mattarei}
\email{smattarei@lincoln.ac.uk}
\address{Charlotte Scott Centre for Algebra\\
University of Lincoln \\
Brayford Pool
Lincoln, LN6 7TS\\
United Kingdom}
\author{Marco Pizzato}
\email{marco.pizzato1@gmail.com}
\address{Dipartimento di Matematica\\
  Universit\`a degli Studi di Trento\\
  via Sommarive 14\\
  I-38123 Povo (Trento)\\
  Italy}
\title{Irreducible polynomials from a cubic transformation}
\begin{abstract}
Let $R(x)=g(x)/h(x)$ be a rational expression of degree three over the finite field $\F_q$.
We count the irreducible polynomials in $\F_q[x]$, of a given degree, which have the form
$h(x)^{\deg f}\cdot f\bigl(R(x)\bigr)$ for some $f(x)\in\F_q[x]$.
As an application, we recover the number of irreducible transformation shift registers of order three,
previously computed by Jiang and Yang.
\end{abstract}
\subjclass[2000]{Primary 12E05; secondary 12E20}
\keywords{Irreducible polynomial; Cubic transformation}

\maketitle

\section{Introduction}\label{sec:intro}

Let $R(x)$ be a rational expressions of degree $r$ over a finite field $\F_q$.
Thus, $R(x)$ has the form $R(x) = g(x)/h(x)$, where $g(x)$ and $h(x)$ are coprime polynomials in $\F_q[x]$ with $\max\{\deg g,\deg h\} = r$.
To $R(x)$ (or, to be rigorous, to the pair of polynomials $g(x)$, $h(x)$, which is determined by $R(x)$ only up to scalar multiples)
we associate a {\em transformation} of polynomials in $\F_q[x]$,
given by sending a polynomial $f(x)$ in $\F_q[x]$ to the polynomial
$f_R(x)=h(x)^n\cdot f\bigl(g(x)/h(x)\bigr)$.

The case $r=1$ is of little interest, but {\em quadratic transformations} (that is, with $r=2$) occur naturally.
As a notable example, any {\em self-reciprocal} polynomial $F(x)\in \F_q[x]$ (which means satisfying $x^{\deg F}\cdot F(1/x)=F(x)$)
which does not have $1$ or $-1$ as a root can be written in the form
$F(x)=x^{\deg f}\cdot f(x+1/x)$ for some polynomial $f(x)\in\F_q[x]$,
hence $F(x)=f_R(x)$  with $R(x)=(x^2+1)/x$.

The number of self-reciprocal irreducible monic polynomials of a given degree over $\F_q$ was determined by Carlitz in~\cite{Carlitz:srim},
and later investigated by various other authors.
In particular, in~\cite{Ahmadi:Carlitz} Ahmadi extended Carlitz's formula to a count of irreducible polynomials
obtained from an arbitrary quadratic transformation.

The fact that the counting formula found by Ahmadi is essentially the same as Carlitz's formula for the special case $R(x)=(x^2+1)/x$,
at least for $q$ odd, was given a simple explanation in~\cite{MatPiz:self-reciprocal}.
It depends on the fact that such enumeration is essentially unaffected
when the quadratic rational expression $g(x)/h(x)$ is replaced with any other obtained from it by composition, on both sides,
with independent M\"obius transformations (meaning rational expressions of degree $1$).
We call such rational expressions {\em equivalent} in this paper.
It is not hard to see that all quadratic rational expressions $g(x)/h(x)$ over an algebraically closed field of odd
(or zero) characteristic are equivalent.
Over a finite field $\F_q$ of odd characteristic they split into two equivalence classes.
However, the rational expressions in either class happen to give the same enumeration formula for the irreducible polynomials $F(x)$
obtained from the corresponding transformations.
Incidentally, because in odd characteristic the quadratic rational expression $(x^2+1)/x$ is equivalent to $x^2$,
that enumeration formula also counts the number of irreducible polynomial of the form $f(x^2)$ over the field $\F_q$ with $q$ odd,
which is a very special case of a result of Cohen~\cite[Theorem~3]{Cohen:irreducible}.

These results motivate an investigation of irreducible polynomials obtained from a transformation of higher degree,
that is, of the form $f_R(x)$ where $R(x)=g(x)/h(x)$ a rational expression of arbitrary degree $r$.
The special case where $g(x)=1$ is of particular relevance for practical applications
because of its direct connection with transformation shift registers (TSRs), see~\cite[Section~3]{Cohen+:TSR}.

In this paper we study the case of {\em cubic transformations}.
Given coprime polynomials $g(x),h(x)\in\F_q[x]$,
following notation in~\cite{Cohen+:TSR} we let $\mathcal{I}(g,h,n,q)$
denote the set of monic irreducible polynomials $f(x)\in\F_q[x]$ of degree $n>1$
such that $f_R(x)$ is irreducible in $\F_q[x]$.
In Section~\ref{sec:tools} we extend the method used in~\cite{Ahmadi:Carlitz} for quadratic transformations,
to a general procedure for computing the cardinality of $\mathcal{I}(g,h,n,q)$
when $R(x)=g(x)/h(x)$ is an arbitrary cubic rational expression over $\F_q$.

Because the procedure of Section~\ref{sec:tools} involves the number of $\F_q$-rational points of a certain
cubic or quartic affine plane curve over $\F_q$ associated with the rational expression $R(x)$,
it is not possible to obtain a general explicit formula for $|\mathcal{I}(g,h,n,q)|$, covering all cases.
This is in contrast with the case of quadratic transformations as studied in~\cite{Ahmadi:Carlitz,MatPiz:self-reciprocal}.
However, we find explicit formulas in all cases where that is possible, which include
those arising from TSRs.

Because $|\mathcal{I}(g,h,n,q)|$ is unaffected by replacing $R(x)=g(x)/h(x)$ with an equivalent rational expression,
as we show in Section~\ref{sec:equivalence}, it is useful to have a classification, or at least a partial classification,
up to equivalence, of cubic rational maps over a finite field.
Such a classification is produced in the paper~\cite{MatPiz:cubic-maps},
at least for cubic rational expressions having at most three ramification points (over the algebraic closure $\barF_q$).
We quote it here as Theorem~\ref{thm:cubic_expr_finite} for characteristic at least five, where it provides nice representatives
for the resulting four equivalence classes of cubic rational expressions.
In Section~\ref{sec:three_ram_points} we find explicit formulas for $|\mathcal{I}(g,h,n,q)|$
in case $R(x)=g(x)/h(x)$ equals one of those four representatives.
When $R(x)$ has four ramification points the cubic or quartic curve mentioned above is irreducible of genus one,
so we must content ourselves with an estimate, coming from the Hasse-Weil bound,
which we state in Theorem~\ref{thm:four_ram_points}.

A classification of cubic rational expressions up to equivalence is different in characteristic three or two,
and we quote the corresponding results from~\cite{MatPiz:cubic-maps}
as Theorems~\ref{thm:cubic_expr_finite_3} and~\ref{thm:cubic_expr_finite_2}.
In Sections~\ref{sec:char_three} and~\ref{sec:char_two} we compute $|\mathcal{I}(g,h,n,q)|$ in those cases.
Note that in the case of characteristic two, which is likely the most important for applications,
no cubic rational expression can have more than three ramification points, hence Theorem~\ref{thm:cubic_expr_finite_2}
provides a classification of all cubic rational expressions.
Still, a curve of genus one arises in some of the equivalence classes,
leading again to an estimate in those cases rather than an explicit formula.

In the final Section~\ref{sec:TSR} we apply our results to obtain an alternate derivation of the formulas for
the number of irreducible TSRs of order three over $\F_{q^m}$, originally obtained by Jiang and Yang in~\cite{Jiang-Yang}.

The research leading to this paper began when the second author was a PhD student at the University of Trento, Italy,
under the supervision of the first author.
Part of these results have appeared among other results in~\cite{Pizzato:thesis}.

\section{Rational expressions and associated polynomial transformations}\label{sec:equivalence}

Consider a rational expression $R(x)=g(x)/h(x)\in K(x)$, over a field $K$, where $g(x)$ are $h(x)$ coprime polynomials in $K[x]$.
Its {\em degree} $\deg R$ is $\max(\deg g,\deg h)$;
the expression will be called {\em linear, quadratic, cubic,}
when its degree equals $1,2,3$.
It is easy to see that the degree of the composite of two rational expressions
equals the product of their degrees.
The linear rational expressions
(which also go by various other names, such as {\em fractional linear transformations,} or {\em M\"obius transformations}) form a group,
usually called {\em the M\"obius group} over $K$.

To a rational expression $R(x)$ over the field $K$
we associate a {\em transformation} of polynomials in $K[x]$,
which sends (zero to zero if we like, and) a nonzero polynomial $f(x)$ to the polynomial
$f_R(x):=h(x)^{\deg f}\cdot f\bigl(g(x)/h(x)\bigr)$.
Thus, the transformation is given by the substitution $x\mapsto R(x)$ into $f(x)$,
followed with multiplication
by the least power of $h(x)$ required to clear denominators and ensure that $f_R(x)$ is actually a polynomial.
Note that $f_R(x)$ changes by a nonzero scalar factor if we rewrite
$R(x)=g(x)/h(x)$ in an equivalent form $\bigl(ag(x)\bigr)/\bigl(ah(x)\bigr)$.
Hence, strictly speaking, $f_R(x)$ depends on $g(x)$ and $h(x)$ rather than just their quotient $R(x)$,
but we will abuse notation and assume that we have fixed a representation of $R(x)$ as $g(x)/h(x)$
when using the notation $f_R(x)$.

For a rational expression $R(x)$ of arbitrary degree $r$ we clearly have $\deg f_R\leq r\deg f$,
but equality need not always hold.

\begin{lemma}\label{lemma:degree_drop}
Let $R(x)=g(x)/h(x)$ be a rational expression of degree $r\ge 1$ over $K$,
and write $g(x)=\sum_i g_ix^i$ and $h(x)=\sum_i h_ix^i$.
Then
\begin{itemize}
\item[(a)]
$\deg f_R=r\deg f$ unless $h_r\neq 0$ and $f(g_r/h_r)=0$;
\item[(b)]
$\deg f_R=0$ if and only if $h_rg(x)-g_rh(x)$ is a constant and
$f(x)$ is a scalar multiple of a power of $h_rx-g_r$.
\end{itemize}
\end{lemma}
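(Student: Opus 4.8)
The plan is to factor $f$ over an algebraic closure $\overline{K}$ and turn the transformation into a product, which makes both degree statements transparent. Writing $n=\deg f$ and $f(x)=a_n\prod_{k=1}^n(x-\alpha_k)$ with $a_n\neq 0$ and $\alpha_k\in\overline{K}$, substituting $x\mapsto g(x)/h(x)$ and clearing denominators gives
\[
f_R(x)=h(x)^n\,f\!\bigl(g(x)/h(x)\bigr)=a_n\prod_{k=1}^n\bigl(g(x)-\alpha_k h(x)\bigr).
\]
Since $g$ and $h$ are coprime and $r\ge 1$, no factor $g-\alpha_k h$ can vanish identically (that would force $g$ and $h$ proportional, hence both constant), so each factor is a nonzero polynomial and $\deg f_R=\sum_{k=1}^n\deg\bigl(g-\alpha_k h\bigr)$. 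The whole argument then reduces to deciding the degree of a single expression $g-\alpha h$: its degree-$r$ coefficient is $g_r-\alpha h_r$, so $\deg(g-\alpha h)=r$ whenever $h_r=0$ (as then $g_r\neq 0$), while if $h_r\neq 0$ the degree drops below $r$ exactly when $\alpha=g_r/h_r$.

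For part (a) I would simply add up these local degrees. Each factor contributes $r$ unless $h_r\neq 0$ and $\alpha_k=g_r/h_r$, so $\deg f_R<r\deg f$ occurs precisely when $h_r\neq 0$ and at least one root $\alpha_k$ equals $g_r/h_r$; the latter says exactly that $f(g_r/h_r)=0$. This yields the stated criterion (in fact a biconditional).

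For part (b) the condition $\deg f_R=0$ forces every $\deg(g-\alpha_k h)=0$, i.e.\ each $g-\alpha_k h$ to be a nonzero constant. The main obstacle, and the only real content, is to show this pins down $f$: if two roots $\alpha_k\neq\alpha_l$ both gave constants, then $(g-\alpha_k h)-(g-\alpha_l h)=(\alpha_l-\alpha_k)h$ would be a constant, forcing $h$ constant; but then $r=\deg R=\deg g\ge 1$ would make $g-\alpha_k h$ nonconstant, a contradiction. Hence all $\alpha_k$ coincide in a single $\alpha$, so $f=a_n(x-\alpha)^n$. Inspecting the degree-$r$ coefficient of the constant $g-\alpha h$ then forces $g_r-\alpha h_r=0$ with $h_r\neq 0$, whence $\alpha=g_r/h_r\in K$ and $f=(a_n/h_r^n)(h_r x-g_r)^n$, a scalar multiple of a power of $h_r x-g_r$; moreover $g-\alpha h=(h_r g-g_r h)/h_r$ being constant is the same as $h_r g-g_r h$ being constant. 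For the converse I would run this backwards: assuming $h_r g-g_r h$ constant and $f$ a scalar multiple of $(h_r x-g_r)^n$, note $h_r\neq 0$ (else $g,h$ would be proportional), so the single repeated factor $g-\alpha h$ with $\alpha=g_r/h_r$ equals the constant $(h_r g-g_r h)/h_r$, which is nonzero by coprimality, giving $\deg f_R=0$. The one point to watch throughout is that all degree bookkeeping is carried out over $\overline{K}$, while the nonvanishing of the constants and the membership $\alpha\in K$ come from coprimality of $g,h$ together with $r\ge 1$.
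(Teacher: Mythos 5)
Your proof is correct, but it takes a genuinely different route from the paper's. You factor $f(x)=a_n\prod_k(x-\alpha_k)$ over $\overline{K}$ so that $f_R(x)=a_n\prod_k\bigl(g(x)-\alpha_k h(x)\bigr)$, and reduce everything to the degree of a single binomial $g-\alpha h$; this makes (a) an immediate biconditional and turns (b) into the observation that two distinct roots cannot both yield constant factors. The paper instead works entirely over $K$: for (a) it expands $f_R(x)=\sum_i f_i\,g(x)^i h(x)^{n-i}$ and reads off the coefficient of $x^{rn}$ as $h_r^n f(g_r/h_r)$; for (b) it normalizes to $\tilde R(x)=R(x)-g_r/h_r$, so that the new numerator $\tilde g$ has degree less than $\deg h$, and then runs a dominant-term argument on the lowest power of $x$ dividing $f$, concluding $f=f_nx^n$ and $\tilde g$ constant. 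Your approach buys uniformity (one mechanism handles both parts) and the explicit "if and only if" in (a); the paper's buys freedom from the algebraic closure and avoids having to argue that the common root $\alpha$ lies in $K$ (which you correctly extract from $g_r-\alpha h_r=0$ and $h_r\neq 0$). One slip worth fixing: in your converse for (b), the claim that $h_rg-g_rh$ constant forces $h_r\neq 0$ "else $g,h$ would be proportional" is false — take $g=x^3$, $h=1$, where $h_r=0$ and $h_rg-g_rh=-1$ is constant while $g,h$ are coprime and not proportional. What actually happens when $h_r=0$ is that $h_rx-g_r$ is the nonzero constant $-g_r$, so the hypothesis forces $f$ to be constant, and then $f_R=f$ has degree $0$ trivially; a one-line case distinction repairs the argument without affecting anything else.
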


\begin{proof}
Writing $f(x)=\sum_if_ix^i$ and $n=\deg f$, we see that the polynomial
$f_R(x)=\sum_if_i\,g(x)^i\,h(x)^{n-i}$
has degree at most $rn$, and the coefficient of $x^{rn}$ in it equals
$\sum_if_i\,g_r^i\,h_r^{n-i}=h_r^n\,f(g_r/h_r)$.
Assertion~(a) follows.

Regarding Assertion~(b), if $f(x)=(h_rx-g_r)^n$ then
$f_R(x)
=\bigl(h_rg(x)-g_rh(x)\bigl)^n$,
and this is a (nonzero) constant if $h_rg(x)-g_rh(x)$ is.
To prove the converse implication,
we may assume $h_r\neq 0$ because of Assertion~(a).
It is also enough to prove the statement with $R(x)$ replaced by
$\tilde R(x)=\tilde g(x)/h(x)=R(x)-g_r/h_r$,
whence $h_r\tilde g(x)=h_rg(x)-g_rh(x)$
has degree less than the degree of $h(x)$.
Thus, suppose $\deg f_{\tilde R}=0$, that is, $f_{\tilde R}(x)=f_R(x-g_r/h_r)$ is a nonzero constant.
Writing $\tilde g(x)=\sum_i\tilde g_ix^i$,
if $x^m$ is the highest power of $x$ which divides $f(x)$, then the term $f_m\,\tilde g(x)^m\,h(x)^{n-m}$
has higher degree than each other term in the sum
$f_{\tilde R}(x)=\sum_if_i\,\tilde g(x)^i\,h(x)^{n-i}$.
This forces $m=n$, and then $f_n\tilde g(x)^n=1$ implies that $\tilde g(x)$ is a constant, as desired.
\end{proof}

With natural projective interpretation,
Assertion~(a) of Lemma~\ref{lemma:degree_drop} says that the drop in degree $\deg f_R<r\deg f$
occurs when the value of $R(x)$ at $\infty$ is a root of $f(x)$.
In particular, this cannot occur when $f$ is irreducible and $\deg f>1$.
Furthermore,  because in general $f_R(x)=u_R(x)\,v_R(x)$ if $f(x)=u(x)\,v(x)$,
Assertion~(b) of Lemma~\ref{lemma:degree_drop} shows that
$f_R(x)$ cannot be irreducible unless $f(x)$ is irreducible or,
only in case $h_rg(x)-g_rh(x)$ is a constant,
$f(x)$ is the product of an irreducible polynomial $u(x)$ and a power of $h_rx-g_r$.
Because in the latter case we have $f_R(x)=u_R(x)$,
it is harmless to assume $f(x)$ to be irreducible
when studying irreducible polynomials of the form $f_R(x)$.

Following notation in~\cite{Cohen+:TSR} we let $\mathcal{I}(g,h,n,q)$ denote the set of monic irreducible polynomials $f(x)\in\F_q[x]$ of degree $n>1$
such that $f_R(x)$ is irreducible in $\F_q[x]$.
The possibility of $n=1$ is deliberately excluded from the notation because this case in uninteresting and
its inclusion would create tedious case distinctions.
The following result shows that our goal of counting irreducible polynomials of the form $f_R(x)$
is not affected by composing
$g(x)/h(x)$ with M\"obius transformations on either side.

\begin{lemma}\label{lemma:invariance}
Let $R(x)=g(x)/h(x)$ be a rational expression of degree $r\ge 1$ over $\F_q$.
Let $A(x)$ and $B(x)$ be linear rational expressions over $\F_q$, and write
$(B\circ R\circ A)(x)=\tilde R(x)=\tilde g(x)/\tilde h(x)$.
Then for $n>1$ we have
\[
|\mathcal{I}(\tilde g,\tilde h,n,q)|=|\mathcal{I}(g,h,n,q)|.
\]
\end{lemma}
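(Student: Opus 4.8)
The plan is to reduce the general case $\tilde R = B\circ R\circ A$ to two separate invariance statements---one for composition on the right with a M\"obius transformation $A$, and one for composition on the left with a M\"obius transformation $B$---since $B\circ R\circ A=(B\circ R)\circ A$ and each operation will be shown to preserve $|\mathcal I(\,\cdot\,,\,\cdot\,,n,q)|$. The engine behind both is the contravariant behaviour of the transformation under composition: for a rational expression $S$ and a rational expression $T$, substituting $S\circ T$ into $f$ is the same as substituting $T$ into $f_S$, so that $f_{S\circ T}=c\cdot (f_S)_T$ for a nonzero scalar $c$, at least once we check that the numerator and denominator obtained for $S\circ T$ are already coprime and of the correct degree. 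Throughout, I would use that $f$ is monic irreducible of degree $n>1$, which by the observation following Lemma~\ref{lemma:degree_drop} guarantees that none of the transformations in sight suffers a drop in degree; this removes all the delicate cases and is the reason the statement is clean only for $n>1$.

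First I would treat right composition, $\tilde R=R\circ A$ with $A(x)=\alpha_1(x)/\alpha_0(x)$ M\"obius. Writing the composite as $g(A(x))/h(A(x))$ and clearing denominators with $\alpha_0^r$ gives candidate numerator $G=\alpha_0^r\,g(A)$ and denominator $H=\alpha_0^r\,h(A)$; a B\'ezout relation $ug+vh=1$ together with $\max(\deg g,\deg h)=r$ forces $\gcd(G,H)=1$ (at a root of $\alpha_0$ the pair $(G,H)$ evaluates to a nonzero scalar multiple of $(g_r,h_r)$, which is not $(0,0)$), so $(\tilde g,\tilde h)=(G,H)$ up to a common scalar. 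A direct computation then gives $f_{R\circ A}=c\,(f_R)_A$. Since $f_R$ has degree $rn>1$, I would invoke the fact that the M\"obius action $P\mapsto P_A$ carries monic irreducible polynomials of degree $>1$ to monic irreducible polynomials of the same degree (the roots of $P_A$ are the images under the $\F_q$-rational map $A^{-1}$ of the roots of $P$, hence form a single Galois orbit of the same size). Consequently $f_{\tilde R}$ is irreducible if and only if $f_R$ is, and the sets $\mathcal I(\tilde g,\tilde h,n,q)$ and $\mathcal I(g,h,n,q)$ are in fact \emph{equal}.

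Next I would treat left composition, $\tilde R=B\circ R$ with $B(x)=\beta_1(x)/\beta_0(x)$ M\"obius. Here the analogous computation, using that $\beta_0(R)$ has the form $(pg+qh)/h$, identifies $\tilde h=pg+qh=h\cdot\beta_0(R)$ and yields $f_{B\circ R}=c\,(f_B)_R$. Now the indexing polynomial changes: letting $\hat f$ be the monic normalization of $f_B$ (again irreducible of degree $n$ by the M\"obius fact above, the normalization being harmless because $f\mapsto f_R$ is linear in $f$), we obtain $f_{\tilde R}=c'\,(\hat f)_R$, so $f_{\tilde R}$ is irreducible precisely when $\hat f\in\mathcal I(g,h,n,q)$. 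The assignment $f\mapsto\hat f$ is a bijection on the set of monic irreducible polynomials of degree $n$, with inverse induced by $B^{-1}$, and it restricts to a bijection $\mathcal I(\tilde g,\tilde h,n,q)\to\mathcal I(g,h,n,q)$; hence the two sets have the same cardinality. Composing the right- and left-composition statements gives the lemma.

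I expect the main obstacle to be bookkeeping rather than conceptual: verifying that the numerator and denominator produced for each composite are genuinely coprime of degree $r$ (so that passing to lowest terms costs only a scalar, with no spurious linear factor such as a power of $\alpha_0$ surviving), and checking that no degree drop occurs at any stage. Both are handled by the coprimality and leading-coefficient argument sketched above and by the restriction to irreducible $f$ of degree $n>1$; once these are in place, the two invariance statements and their combination are immediate.
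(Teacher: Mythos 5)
Your proposal is correct, and its skeleton coincides with the paper's: split $B\circ R\circ A$ into a pre-composition and a post-composition step; show pre-composition with $A$ leaves the set $\mathcal{I}(g,h,n,q)$ literally unchanged, via $f_{R\circ A}=c\,(f_R)_A$; and show post-composition with $B$ induces a bijection on the index polynomials, your $f\mapsto\hat f$ being exactly the paper's $f\mapsto\tilde f$, via the identity $f_{B\circ R}=c\,(f_B)_R$ (which the paper writes as $f_{\tilde R}(x)=\tilde f_R(x)$). The genuine difference is the decomposition: the paper reduces to the generators $ax+b$ and $1/x$ of the M\"obius group and checks four cases, in each of which preservation of irreducibility is immediate by inspection ($f_R(ax+b)$, the reciprocal polynomial), whereas you treat arbitrary $A$ and $B$ in one stroke. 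This buys uniformity and makes the contravariant composition identity explicit, but obliges you to supply two verifications that the generator reduction renders trivial: that the composite representative $\bigl(\alpha_0^r\,g(A),\,\alpha_0^r\,h(A)\bigr)$ is coprime of degree $r$ (your argument is sound: at a root $x_0$ of $\alpha_0$ the pair evaluates to $\alpha_1(x_0)^r(g_r,h_r)\neq(0,0)$ since $\alpha_0,\alpha_1$ share no root, and elsewhere a common root would contradict $\gcd(g,h)=1$), and the Galois-orbit fact that $P\mapsto P_A$ carries monic irreducibles of degree $>1$ to irreducibles of the same degree, which is correct as stated and is implicitly re-proved case-by-case in the paper. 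Your appeal to $n>1$ and the remark after Lemma~\ref{lemma:degree_drop} to rule out degree drops in $f_R$, $f_B$ and $P_A$ is exactly where the paper needs the same hypothesis, e.g.\ to exclude $f(x)=x$ in its $B(x)=1/x$ case; in short, your write-up is a valid, mildly more general repackaging of the paper's argument, with all the delicate points correctly identified and disposed of.
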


\begin{proof}
Because of our restriction $n>1$, according to Lemma~\ref{lemma:degree_drop} we have
$\deg f_R=r\deg f=rn$ for every irreducible polynomial $f(x)\in K[x]$ of degree $n$.

Since the M\"obius group is generated (under composition) by
the expressions $ax+b$, with $a\in \F_q^\ast$ and $b\in \F_q$, and $x\mapsto 1/x$,
in order to prove Lemma~\ref{lemma:invariance} it suffices to show that its conclusion holds in the special cases where one of $A(x)$ and $B(x)$
is one of those special expressions, and the other is $x$ (the identity of the M\"obius group).

When $A(x)=ax+b$ and $B(x)=x$, the polynomial $f_{\tilde R}(x)=f_R(ax+b)$ is clearly irreducible if and only if $f_R(x)$ is.
Hence in this case we have
$\mathcal{I}(\tilde g,\tilde h,n,q)=\mathcal{I}(g,h,n,q)$.

When $A(x)=1/x$ and $B(x)=x$, up to scalar factors we may take $\tilde g(x)=x^rg(1/x)$ and $\tilde h(x)=x^rh(1/x)$, whence
\[
f_{\tilde R}(x)=
\bigl(x^rh(1/x)\bigr)^{\deg f}\cdot f\bigl(g(1/x)/h(1/x)\bigr).
\]
Because $\deg f_R=r\deg f$ this equals the {\em reciprocal} polynomial $x^{\deg f_R}f_R(1/x)$ of $f_R(x)$,
which is irreducible if and only $f_R(x)$ is.
Once again we find
$\mathcal{I}(\tilde g,\tilde h,n,q)=\mathcal{I}(g,h,n,q)$.

If $B(x)$ does not equal a scalar multiple of $x$, the sets
$\mathcal{I}(\tilde g,\tilde h,n,q)$ and $\mathcal{I}(g,h,n,q)$ are generally different,
because the irreducibility of $f_{\tilde R}(x)$ does not directly relate to the irreducibility of $f_R(x)$.

However, when $A(x)=x$ and $B(x)=ax+b$,
whence $\tilde R(x)=ag(x)/h(x)+b$,
the map $f(x)\mapsto\tilde f(x)=f(ax+b)$
is a degree-preserving bijection from the set of irreducible polynomials $f(x)$ such that
$f_{\tilde R}(x)$ is irreducible, onto the set of irreducible polynomials $\tilde f$ such that
$\tilde f_R(x)$ is irreducible, because
$f_{\tilde R}(x)=\tilde f_R(x)$.
In particular, the map $f(x)\mapsto\tilde f(x)=a^{-\deg f}f(ax+b)$
gives a bijection from $\mathcal{I}(\tilde g,\tilde h,n,q)$ onto $\mathcal{I}(g,h,n,q)$.

Similarly, when $A(x)=x$ and $B(x)=1/x$, whence $\tilde R(x)=h(x)/g(x)$,
the map $f(x)\mapsto\tilde f(x)=x^{\deg f}f(1/x)$
is a degree-preserving bijection from the set of irreducible polynomials $f$ with $\deg f>1$ such that
$f_{\tilde R}(x)$ is irreducible, onto the set of irreducible polynomials $\tilde f$ with $\deg\tilde f>1$ such that
$\tilde f_R(x)$ is irreducible,
again because
$f_{\tilde R}(x)=\tilde f_R(x)$.
Here the assumption $\deg f>1$ serves to exclude the exceptional case $f(x)=x$.
As in the previous case it follows that
$|\mathcal{I}(\tilde g,\tilde h,n,q)|=|\mathcal{I}(g,h,n,q)|$,
and this concludes the proof.
\end{proof}

According to Lemma~\ref{lemma:invariance}, for the purpose of counting the irreducible polynomials
of the form $f_R(x)$ and a given degree over a finite field $K=\F_q$,
we may take advantage of any normalization which replaces $R(x)$ with some
$\tilde R(x)=(B\circ R\circ A)(x)$
having a simpler form.
Call two such $R(x)$ and $\tilde R(x)$ {\em equivalent}.

In~\cite{MatPiz:cubic-maps} we studied the 
equivalence classes of cubic rational expressions $R(x)=g(x)/h(x)$ over a finite field,
and gave representatives for those equivalence classes of expressions having at most three ramification points (over the algebraic closure $\barF_q$).
For the purposes of this paper, the {\em ramification points} of a rational expression $R(x)$ are the zeroes of its derivative,
which are the same as the zeroes of the polynomial $g'(x)h(x)-g(x)h'(x)$,
together with $\infty$ in case this polynomial has degree less than $2\deg R-2$.
In particular, a cubic rational expression $R(x)$ has at most four ramification points.

\begin{theorem}[\cite{MatPiz:cubic-maps}, Theorem~14]\label{thm:cubic_expr_finite}
Let $\F_q$ be a finite field of characteristic at least five,
and let $\sigma\in\F_q$ be a nonsquare.
Then any cubic rational expression $R(x)$ over $\F_q$ with at most three ramification points (over $\barF_q$) is equivalent to either
$x^3$, or $(x^3+3\sigma x)/(3x^2+\sigma)$, or $x^3-3x$, or $x^3-3\sigma x$.
\end{theorem}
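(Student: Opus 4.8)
The plan is to reconstruct the classification directly, combining the Riemann--Hurwitz formula with a sequence of normalizations by M\"obius transformations on both sides, all the while keeping track of which objects are defined over $\F_q$ (as opposed to merely over $\barF_q$). The first step is to determine the possible ramification profiles. A cubic rational expression defines a degree-three map $\Proj^1\to\Proj^1$, and since $\charac\F_q\ge 5$ all ramification is tame, so Riemann--Hurwitz gives $\sum_P(e_P-1)=4$ with each $e_P\in\{1,2,3\}$. Requiring at most three ramification points leaves exactly two profiles: either two points with $e_P=3$, which I call type $(3,3)$, or one point with $e_P=3$ and two with $e_P=2$, type $(3,2,2)$. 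Because the ramification divisor is the zero locus of $g'h-gh'$ together possibly with $\infty$, it is defined over $\F_q$, so the ramification points with their indices form a Frobenius-stable set; moreover two totally ramified points cannot share an image, since that would force ramification $3+3>3$ over a single value.

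Next I would run the normalizations. For type $(3,2,2)$ the unique index-$3$ point is a singleton Frobenius orbit, hence $\F_q$-rational, and so is its image; using $A$ and $B$ I move both to $\infty$, turning $R$ into a cubic \emph{polynomial}, which I then depress and scale to the form $x^3+px$ with $p\neq 0$. The residual freedom $A(x)=ax$, $B(x)=a^{-3}x$ sends $p\mapsto a^{-2}p$, so the class is determined precisely by $p$ modulo squares; this yields the two representatives $x^3-3x$ and $x^3-3\sigma x$, the coefficient $-3$ being a cosmetic choice. For type $(3,3)$ the two totally ramified points are either both $\F_q$-rational or else a conjugate pair in $\Proj^1(\F_{q^2})$, and the same dichotomy holds for their images by Frobenius-equivariance. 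In the rational sub-case I move the points to $0,\infty$ and their images to $0,\infty$, obtaining $R=cx^3$ and then $x^3$.

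The main obstacle is the remaining sub-case of type $(3,3)$, where the totally ramified points form a conjugate pair and genuinely produce a new class over $\F_q$ that collapses to $x^3$ only over $\barF_q$. Here I would pass to the coordinate $w=(x-\sqrt\sigma)/(x+\sqrt\sigma)$ on $\Proj^1_{\F_{q^2}}$, which carries the conjugate pair to $\{0,\infty\}$ and under which the Frobenius real structure becomes $w\mapsto 1/\bar w$. A degree-three map totally ramified at $0,\infty$ has the form $w\mapsto cw^3$, and compatibility with this real structure forces $c\bar c=1$, that is, $c$ of norm one. The residual norm-one scalings $A(w)=\alpha w$, $B(w)=\beta w$ act by $c\mapsto\beta\alpha^3c$ and already sweep out the whole norm-one group through $\beta$ alone, so all admissible $c$ are equivalent and the sub-case contributes a single class. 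Translating any convenient norm-one $c$ back to the $x$-coordinate produces $(x^3+3\sigma x)/(3x^2+\sigma)$; one verifies directly that this expression fixes $\pm\sqrt\sigma$ and satisfies $g'h-gh'=3(x^2-\sigma)^2$, confirming the claimed type.

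Finally I would check that the four representatives are pairwise inequivalent, which is what prevents any further collapse. Equivalence by $\F_q$-rational M\"obius transformations preserves the ramification profile, preserves the Frobenius-type of the ramification points (since $A^{-1}$ commutes with Frobenius), and, within type $(3,2,2)$, preserves the square-class of $p$. The profile separates $\{x^3,\,(x^3+3\sigma x)/(3x^2+\sigma)\}$ from $\{x^3-3x,\,x^3-3\sigma x\}$; rationality of the totally ramified points separates the first pair; and $p$ modulo squares separates the second. Hence exactly the four listed classes occur.
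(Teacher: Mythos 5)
This theorem is imported: the paper quotes it from~\cite{MatPiz:cubic-maps} (Theorem~14) and contains no proof of it, so there is no internal argument to compare yours against line by line. Judged on its own merits, your reconstruction is correct and essentially complete. The Riemann--Hurwitz step is sound: in characteristic at least five a degree-three map of $\Proj^1$ is automatically separable and all ramification is tame, so $\sum_P(e_P-1)=4$ with $e_P\le 3$, and the profiles with at most three ramification points are exactly $(3,3)$ and $(3,2,2)$; your observation that two totally ramified points cannot share an image is the right way to see the branch points are distinct. I verified your computational claims: the residual action on the normal form $x^3+px$ is indeed $p\mapsto a^{-2}p$ (any equivalence between two such forms must fix $\infty$ on both sides, hence is affine--affine, and killing the $x^2$ term forces $b=0$, so the square class of $p$ is a genuine invariant); in the twisted $(3,3)$ case the real structure on the $w$-line is $w\mapsto 1/\bar w$, the equivariance condition on $S(w)=cw^3$ is exactly $c\bar c=1$, the residual scalings act by $c\mapsto\beta\alpha^3c$ with $\beta$ alone covering the norm-one group, and pulling $w^3$ back through $w=(x-\sqrt{\sigma})/(x+\sqrt{\sigma})$ yields precisely $(x^3+3\sigma x)/(3x^2+\sigma)$, whose derivative numerator is $3(x^2-\sigma)^2$ as you assert. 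Your inequivalence argument is also valid, and in fact proves slightly more than the statement requires (the statement only asserts equivalence to one of the four forms).

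Three points are glossed and should be made explicit to call this a proof. First, the Galois-descent step: you need that a M\"obius transformation of the $w$-line commuting with the structure $w\mapsto 1/\bar w$ corresponds, back in the $x$-coordinate, to a M\"obius transformation genuinely defined over $\F_q$ (and conversely); this is standard -- a transformation in $\PGL_2(\barFq)$ fixed by Frobenius is represented by a matrix over $\F_q$ after scaling -- but it is the hinge on which the whole norm-one computation turns, so state it. Second, the orientation cases: after normalizing, $R$ may interchange $0$ and $\infty$, giving $S(w)=c/w^3$ (and similarly $c/x^3$ in the split sub-case); this is repaired by post-composing with $w\mapsto 1/w$, which one checks is compatible with the real structure, but the check should appear. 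Third, your coordinate $w=(x-\sqrt{\sigma})/(x+\sqrt{\sigma})$ presumes the conjugate ramification pair is $\{\pm\sqrt{\sigma}\}$; either invoke transitivity of $\PGL_2(\F_q)$ on Galois-stable non-rational pairs (equivalently, on root pairs of irreducible monic quadratics) to move the pair there first, or work with $w=(x-P)/(x-\bar P)$ for the actual pair $\{P,\bar P\}$ and note the induced real structure is again $w\mapsto 1/\bar w$. With these three routine points filled in, the argument stands as a self-contained proof of the quoted classification.
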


The expressions $x^3$ and $(x^3+3\sigma x)/(3x^2+\sigma)$ have two ramification points ($0$ and $\infty$ in case of the former)
and, in fact, are equivalent over $\F_{q^2}$ (see the proof of~\cite[Theorem~14]{MatPiz:cubic-maps}).
The expressions $x^3-3x$ and $x^3-3\sigma x$ have three ramification points.
The difficulties of extending the above classification up to equivalence to cubic rational expressions with
four ramification points are discussed in~\cite[Remark~15]{MatPiz:cubic-maps}.

\section{A general procedure}\label{sec:tools}
In this section we describe a method to compute $|\mathcal{I}(g,h,n,q)|$
for a given cubic transformation $R(x)=g(x)/h(x)$.
We will use the following standard fact, known as {\em Capelli's lemma,} see~\cite[Lemma~1]{Cohen:irreducible} for a proof.

\begin{lemma}[Capelli]\label{Capelli}
Let $f(x)\in\F_q[x]$ be an irreducible polynomial of degree $n$ and let $g,h \in\F_q[x]$.
Then $h(x)^n\cdot f\bigl(g(x)/h(x)\bigr)$ is irreducible in $\F_q[x]$
if and only if $g(x)-\beta h(x)$ is irreducible in
$\F_{q^n}[x]$, where $\beta\in\F_{q^n}$ is any root of $P(x)$.
\end{lemma}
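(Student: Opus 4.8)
The plan is to reduce both implications to a single computation of field degrees. Write $\beta=\beta_0,\beta_1,\dots,\beta_{n-1}$ for the roots of $f$ in $\F_{q^n}$, so that $\beta_i=\beta^{q^i}$ and $\F_{q^n}=\F_q(\beta)$. Factoring $f(y)=c\prod_{i=0}^{n-1}(y-\beta_i)$ over $\F_{q^n}$, with $c$ its leading coefficient, and substituting $y=g(x)/h(x)$ while clearing the $n$ denominators with the factor $h(x)^n$, I would obtain the factorization
\[
F(x):=h(x)^n f\bigl(g(x)/h(x)\bigr)=c\prod_{i=0}^{n-1}\bigl(g(x)-\beta_i\,h(x)\bigr)
\]
over $\F_{q^n}$ (indeed over $\barF_q$). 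Setting $G_i(x)=g(x)-\beta_i h(x)$, the first observation is that the Frobenius substitution $a\mapsto a^q$ on coefficients fixes $g$ and $h$ and sends $\beta_i$ to $\beta_{i+1}$ (indices modulo $n$), so it permutes the factors $G_0,\dots,G_{n-1}$ in a single cycle of length $n$.

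Next I would pass to a root. Fix a root $\alpha$ of $G_0=g-\beta h$ in $\barF_q$; then $\alpha$ is also a root of $F$. The key structural point is that $\F_{q^n}\subseteq\F_q(\alpha)$: from $g(\alpha)=\beta\,h(\alpha)$ one reads $\beta=g(\alpha)/h(\alpha)\in\F_q(\alpha)$, where $h(\alpha)\neq 0$ because $h(\alpha)=0$ would force $g(\alpha)=0$ as well, contradicting the coprimality of $g$ and $h$. Since $[\F_{q^n}:\F_q]=n$, the tower law gives
\[
[\F_q(\alpha):\F_q]=n\cdot[\F_q(\alpha):\F_{q^n}].
\]
Because $g$ and $h$ have their coefficients in $\F_q$ while $\beta\notin\F_q$ (as $n>1$), no cancellation of leading terms occurs, so $\deg G_i=r$ for every $i$ and hence $\deg F=rn$; this is the no-degree-drop case of Lemma~\ref{lemma:degree_drop}(a). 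Consequently $[\F_q(\alpha):\F_{q^n}]\le\deg G_0=r$, with equality precisely when $G_0$ is the minimal polynomial of $\alpha$ over $\F_{q^n}$, that is, precisely when $g-\beta h$ is irreducible over $\F_{q^n}$.

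Finally I would read off the equivalence from this chain of degrees. Since $\alpha$ is a root of $F$ and $\deg F=rn$, the polynomial $F$ is irreducible over $\F_q$ if and only if it is a scalar multiple of the minimal polynomial of $\alpha$, i.e.\ if and only if $[\F_q(\alpha):\F_q]=rn$. By the tower law this holds if and only if $[\F_q(\alpha):\F_{q^n}]=r$, which by the previous paragraph is equivalent to the irreducibility of $g-\beta h$ over $\F_{q^n}$; combining these proves both implications simultaneously. I expect the delicate points to be bookkeeping rather than conceptual: confirming $\deg F=rn$ so that $F$ is indeed the minimal polynomial of $\alpha$ when irreducible, ruling out $h(\alpha)=0$ through coprimality, and setting aside the degenerate case $n=1$, where $\beta\in\F_q$ and a genuine drop in degree can occur. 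The Frobenius-orbit remark of the first paragraph is not needed for this argument, but it gives a concrete picture of its content: the single $\F_{q^n}$-irreducible factor $g-\beta h$ is spread by the Galois action into all $n$ factors of $F$, which is exactly why irreducibility over $\F_{q^n}$ of one factor is equivalent to irreducibility over $\F_q$ of the whole.
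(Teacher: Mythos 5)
Your proof is correct, and here the comparison with the paper is one-sided: the paper gives no proof of this lemma at all, citing instead \cite[Lemma~1]{Cohen:irreducible}, so your argument supplies exactly what that citation points to, by the standard route. The homogenization identity $h(x)^n f\bigl(g(x)/h(x)\bigr)=c\prod_{i=0}^{n-1}\bigl(g(x)-\beta^{q^i}h(x)\bigr)$ together with the tower law $[\F_q(\alpha):\F_q]=n\cdot[\F_q(\alpha):\F_{q^n}]$ --- valid because $\beta=g(\alpha)/h(\alpha)\in\F_q(\alpha)$ --- does give both implications simultaneously, and each step checks out. Three bookkeeping points deserve mention, all of which you either handle or correctly flag: (1) the coprimality of $g$ and $h$, which you invoke to rule out $h(\alpha)=0$, is not written into the lemma's statement but is a standing hypothesis throughout the paper (and in Cohen's lemma), and it is genuinely needed --- for $g(x)=h(x)=x$ and $f$ an irreducible quadratic, $g(x)-\beta h(x)=(1-\beta)x$ is irreducible over $\F_{q^2}$ while $h(x)^2f\bigl(g(x)/h(x)\bigr)=f(1)\,x^2$ is not; (2) your exclusion of $n=1$ is harmless, since there $\F_{q^n}=\F_q$ and $F(x)=c\bigl(g(x)-\beta h(x)\bigr)$, so the statement is a tautology, and the paper only ever applies the lemma with $n>1$; (3) the symbol $P(x)$ in the statement is evidently a typo for $f(x)$, which you silently and correctly read as such. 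Your degree count $\deg\bigl(g-\beta^{q^i}h\bigr)=r$, justified by $\beta\notin\F_q$, is consistent with the paper's Lemma~\ref{lemma:degree_drop}(a): no degree drop occurs because an irreducible $f$ of degree $n>1$ has no root in $\F_q$. Your closing Frobenius-orbit remark is, as you note, not needed for the argument, but it is an accurate picture of why a single factor's irreducibility over $\F_{q^n}$ controls the irreducibility of all of $F$ over $\F_q$.
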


In what follows we assume, as we may after post-composing with a suitable M\"obius transformation, that $h(x)$ has degree less than three,
and that $g(x)$ is monic.
The main advantage of this assumption is that $g(x)-\beta h(x)$ is then always a monic cubic polynomial, irrespectively of the value of $\beta$.

In essence we follow the method used by Ahmadi in~\cite{Ahmadi:Carlitz} for quadratic transformations,
adjusting his notation to the present more complex cubic setting.
Let $\mathcal{T}(m,q)$
be the set of elements of $\F_{q^m}$ which do not belong to any proper subfield.
In other words, $\mathcal{T}(m,q)$ consists of the elements of $\F_{q^m}$ which have degree $m$ over $\F_q$.
According to Lemma~\ref{Capelli}, we have $|\mathcal{I}(g,h,n,q)|=|U(n,q)|/n$, where
\[
U(m,q) = \{\beta \in \mathcal{T}(m,q): \text{$g(x)-\beta h(x)$ is irreducible over $\F_{q^m}$} \}.
\]
An application of M\"obius inversion will reduce the calculation of $|U(n,q)|$ to finding the cardinalities of the larger sets
\[
\overline{U}(m,q)=\{\beta \in\F_{q^m}:\text{$g(x)-\beta h(x)$ is irreducible over $\F_{q^m}$} \}.
\]
If $\beta \in \mathcal{T}(m,q)$ with $m$ a divisor of $n$ then $\beta\in\overline{U}(n,q)$ precisely when $n/m$ is not a multiple of three
(otherwise $\F_{q^n}$ contains a cubic extension of $\F_{q^m}$, where $g(x)-\beta h(x)$ has a root).
Hence
\begin{equation*}
|\overline{U}(n,q)|
=\sum_{d\mid n,\ 3\nmid d} |U(n/d,q)|,
\end{equation*}
and M\"obius inversion yields
\begin{equation}\label{eq:Moebius}
|\mathcal{I}(g,h,n,q)|=
\frac{1}{n}\sum_{d\mid n,\ 3\nmid d}\mu(d) |\overline{U}(n/d,q)|.
\end{equation}

We now compute the cardinalities of the sets $|\overline{U}(n/d,q)|$.

\begin{lemma}\label{lemma:Ubar}
We have
$|\overline{U}(n,q)|=(2q^n+A-B-2C-2D)/3$,
where
\begin{itemize}
\item
$A$ is the number of distinct roots of $h(x)$ in $\F_{q^n}$;
\item
$B$ is the number of $\beta\in\F_{q^n}$ for which $g(x)-\beta h(x)$ has a double root and a simple root in $\F_{q^n}$;
\item
$C$ is the number of $\beta\in\F_{q^n}$ for which $g(x)-\beta h(x)$ has a triple root in $\F_{q^n}$;
\item
$D$ is the number of $\beta\in\F_{q^n}$ for which $g(x)-\beta h(x)$ has a quadratic irreducible factor in $\F_{q^n}[x]$.
\end{itemize}
\end{lemma}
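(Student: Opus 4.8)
The plan is to classify each $\beta\in\F_{q^n}$ according to the factorization type of the monic cubic $g(x)-\beta h(x)$ over $\F_{q^n}$, and then read off two linear relations among the resulting counts. Under the standing normalization ($g$ monic and $\deg h<3$), the polynomial $g(x)-\beta h(x)$ is monic of degree exactly three for every $\beta$, so its factorization over $\F_{q^n}$ falls into precisely five mutually exclusive types: irreducible; a product of three distinct linear factors; a product of a linear factor and an irreducible quadratic; a double root times a distinct simple root; and a triple root. Writing $N_0=|\overline{U}(n,q)|$ for the number of $\beta$ giving the irreducible type and $N_1$ for the number giving three distinct roots, the remaining three types are counted exactly by $D$, $B$, and $C$ respectively. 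Since every $\beta\in\F_{q^n}$ belongs to exactly one class, this yields the first relation
\[
N_0+N_1+B+C+D=q^n .
\]

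Next I would count roots. The key observation is that $\alpha\in\F_{q^n}$ is a root of $g(x)-\beta h(x)$ if and only if $g(\alpha)=\beta h(\alpha)$; because $g$ and $h$ are coprime, no root of $h$ can satisfy this, so any such $\alpha$ has $h(\alpha)\neq 0$ and hence $\beta=g(\alpha)/h(\alpha)=R(\alpha)$. Conversely, each $\alpha\in\F_{q^n}$ with $h(\alpha)\neq 0$ is a root of $g(x)-\beta h(x)$ for the single value $\beta=R(\alpha)\in\F_{q^n}$. Summing the number of distinct $\F_{q^n}$-roots of $g(x)-\beta h(x)$ over all $\beta$ therefore counts each $\alpha\in\F_{q^n}$ with $h(\alpha)\neq 0$ exactly once, giving $q^n-A$, where $A$ is the number of distinct roots of $h$ in $\F_{q^n}$. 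On the other hand, the number of distinct roots contributed by each factorization type is three, two, one, one, and zero for the three-linear, double, triple, linear-times-quadratic, and irreducible types, so the same sum equals
\[
3N_1+2B+C+D=q^n-A .
\]

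Finally I would eliminate $N_1$ between the two displayed relations: solving the first for $N_1$ and substituting into the second gives $3N_0=2q^n+A-B-2C-2D$, which is the claimed formula. The argument is essentially bookkeeping, and the only step needing care is the root-counting correspondence, namely that coprimality of $g$ and $h$ forces the roots of $g(x)-\beta h(x)$ to be exactly the $R$-preimages of $\beta$ lying off the zero set of $h$, so that each admissible $\alpha$ is attributed to one and only one $\beta$. I do not expect a genuine obstacle here; the real work of the paper, computing $A$, $B$, $C$, and $D$ in terms of the geometry of $R$, is deferred to the later sections, whereas this lemma only records the reduction of $|\overline{U}(n,q)|$ to those four quantities.
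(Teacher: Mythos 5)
Your proof is correct and takes essentially the same route as the paper: your second relation is exactly the paper's double count of the incidence set $W(n,q)=\{(\gamma,\beta)\in\F_{q^n}\times\F_{q^n}: g(\gamma)=\beta h(\gamma)\}$ (coprimality of $g$ and $h$ gives a unique $\beta=R(\gamma)$ for each $\gamma$ with $h(\gamma)\neq 0$, hence the total $q^n-A$), and eliminating $N_1$ reproduces the paper's comparison $q^n-A=3|V(n,q)|-B-2C-2D$ with $|V(n,q)|=N_1+B+C+D$. The only cosmetic difference is that you partition the $\beta$'s into all five factorization types explicitly, whereas the paper lumps the reducible types into $V(n,q)$ and records the root-count deficiencies directly.
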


\begin{proof}
Because reducibility of a cubic polynomial over a field is equivalent to the polynomial having a root in that field,
we have $|\overline{U}(n,q)|=q^n-|V(n,q)|$, where
\[
V(n,q)=\{\beta \in\F_{q^n}: \text{$g(\gamma)=\beta h(\gamma)$ for some $\gamma\in\F_{q^n}$}\}.
\]
To compute $|V(n,q)|$ we perform a double counting on the set
\[
W(n,q)=\{(\gamma,\beta)\in\F_{q^n}\times\F_{q^n}: g(\gamma)=\beta h(\gamma) \}.
\]
Note that $g(x)$ and $h(x)$ have no common root because they are coprime.
On the one hand, for every $\gamma\in\F_{q^n}$ which is not a root of $h(x)$ there is a unique $\beta \in\F_{q^n}$
such that $g(\gamma)=\beta h(\gamma)$.
Hence $|W(n,q)|=q^n-A$, where $A$ is the number of distinct roots of $h(x)$ in $\F_{q^n}$.
On the other hand, counting in terms of $\beta$ and according to the splitting pattern of
$g(x)-\beta h(x)$ we find
\[
|W(n,q)|=3|V(n,q)|-B-2C-2D.
\]
Comparing the two expressions for $|W(n,q)|$ yields the desired conclusion.
\end{proof}

To proceed further we need a convenient way of computing the quantities involved in Lemma~\ref{lemma:Ubar}, especially $D$.
The discriminant
$
\Delta(\beta)=\Disc_x\bigl(g(x)-\beta h(x)\bigr)
$
will help to distinguish between the various factorization patterns of the cubic polynomial $g(x)-\beta h(x)$ as $\beta$ varies.
In particular, $\Delta(\beta)$ vanishes precisely when $g(x)-\beta h(x)$ has a multiple root.
Because a multiple root for a cubic polynomial necessarily lies in the field of coefficients,
$B+C$ equals the number of $\beta\in\F_{q^n}$ for which $\Delta(\beta)=0$.
Note that $\Delta(\beta)$ has degree at most four,
and its roots are the finite branch points of the rational expression $R(x)$.

When $g(x)-\beta h(x)$ is separable, in the modern meaning of having distinct roots in a splitting field,
information on its factorization is provided by its Galois group.
We view that as a group of permutations of the roots
(induced by automorphism of the splitting field), hence as a subgroup of the symmetric group $S_n$
once the roots have been arbitrarily numbered.
In characteristic different from two we have the following classical discriminant criterion.

\begin{theorem}
A separable polynomial of degree $n$, over a field $K$ of characteristic not two,
has Galois group over $K$ contained in the alternating group $A_n$ if and only if its discriminant is a square in $K$.
\end{theorem}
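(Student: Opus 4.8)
The plan is to pass to a splitting field and realize both the discriminant and the alternating-group condition through the action of the Galois group on the ordered roots. Write $f$ for the given polynomial, let $\alpha_1,\dots,\alpha_n$ be its roots in a splitting field $L$ (these are distinct by separability), set $\delta=\prod_{i<j}(\alpha_i-\alpha_j)$, and recall that $\Disc(f)=\delta^2$. Since $f$ is separable, $L/K$ is a Galois extension; the group $G=\mathrm{Gal}(L/K)$ permutes the roots, and fixing a numbering of them embeds $G$ into $S_n$, with image the permutations actually realized by field automorphisms.

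The crux is the elementary identity $\sigma(\delta)=\mathrm{sgn}(\sigma)\,\delta$ for every $\sigma\in S_n$, which I would verify by checking that a single transposition of two roots flips the sign of exactly one factor of $\delta$ (merely permuting the remaining factors) and then extending multiplicatively. Because the sign of a permutation is $+1$ precisely when it lies in $A_n$, this says that an automorphism $\sigma\in G$ fixes $\delta$ if and only if it acts as an even permutation of the roots. Note also that $\delta^2$ is a symmetric function of the roots, hence fixed by all of $G$, so $\Disc(f)=\delta^2$ indeed lies in $K$, as the statement presumes.

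Both implications then reduce to the assertion that $G\subseteq A_n$ is equivalent to $\delta\in K$. If $G\subseteq A_n$, then every $\sigma\in G$ fixes $\delta$, so $\delta$ lies in the fixed field $L^G=K$, whence $\Disc(f)=\delta^2$ is a square in $K$. Conversely, if $\Disc(f)=c^2$ for some $c\in K$, then $\delta^2=c^2$ in $L$ forces $\delta=\pm c\in K$; hence every $\sigma\in G$ fixes $\delta$, so $\mathrm{sgn}(\sigma)=1$ for all $\sigma\in G$, that is, $G\subseteq A_n$.

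The one place where the hypotheses must be invoked carefully — and what I would flag as the only real subtlety — is the step reading off $G\subseteq A_n$ from $\delta\in K$. This uses that $\delta\neq 0$ (guaranteed by separability) together with $\charac K\neq 2$, so that $\delta$ and $-\delta$ are genuinely distinct; only then does ``$\sigma$ fixes $\delta$'' force $\mathrm{sgn}(\sigma)=+1$ rather than holding automatically. In characteristic two one has $\delta=-\delta$, the identity $\sigma(\delta)=\mathrm{sgn}(\sigma)\,\delta$ collapses, and the discriminant is always a square irrespective of $G$, which is exactly why the theorem carries the restriction $\charac K\neq 2$.
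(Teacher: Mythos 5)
Your proof is correct, and it is worth noting that the paper itself offers no proof of this statement: it is quoted there as the ``classical discriminant criterion'' (the paper only sketches an argument for the characteristic-free substitute, Theorem~\ref{thm:Conrad}, via the quadratic resolvent). So there is nothing in the paper to diverge from; what you have written is the canonical argument, and all the load-bearing steps are in place --- the identity $\sigma(\delta)=\mathrm{sgn}(\sigma)\,\delta$ for $\delta=\prod_{i<j}(\alpha_i-\alpha_j)$, the reduction of both implications to ``$G\subseteq A_n$ iff $\delta\in K$'' via the Galois correspondence, and, crucially, the correct identification of where $\delta\neq 0$ and $\charac K\neq 2$ enter (namely, so that $\sigma(\delta)=\delta$ genuinely forces $\mathrm{sgn}(\sigma)=+1$), together with the accurate remark that in characteristic two the discriminant of a separable polynomial is always a square, which explains the hypothesis. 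Two small points of polish: in the transposition computation, the remaining factors are not merely permuted but permuted up to sign changes that cancel in pairs (for a transposition $(k\,l)$ and $k<m<l$, both factors involving $m$ flip sign as they swap), so the net effect is still a single sign flip but the bookkeeping deserves that extra clause; and your identity $\Disc(f)=\delta^2$ implicitly assumes $f$ monic --- in general $\Disc(f)=a^{2n-2}\delta^2$ for leading coefficient $a$, which is harmless here since $a^{2n-2}=(a^{n-1})^2$ is a square in $K$, but worth a half-sentence if the statement is read for arbitrary separable polynomials.
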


For cubic polynomials the following substitute is available,
which avoids the restriction on the characteristic.

\begin{theorem}\label{thm:Conrad}
A separable cubic polynomial $x^3+ax^2+bx+c$, over an arbitrary field $K$,
has Galois group over $K$ contained in the alternating group $A_3$ if and only if its {\em quadratic resolvent}
\[
x^2+(ab-3c)x+(a^3c+b^3+9c^2-6abc)
\]
is reducible over $K$.
\end{theorem}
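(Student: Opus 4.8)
The plan is to pin down the roots of the quadratic resolvent as an explicit pair of conjugate functions of the three roots of the cubic, distinguished by the sign of a permutation. Write $\alpha_1,\alpha_2,\alpha_3$ for the roots of $f(x)=x^3+ax^2+bx+c$ in a splitting field $L$; since $f$ is separable, $L/K$ is Galois, and $G=\mathrm{Gal}(L/K)$ embeds into $S_3$ via its action on the roots. I would set
\[
\rho=\alpha_1\alpha_2^2+\alpha_2\alpha_3^2+\alpha_3\alpha_1^2,
\qquad
\rho'=\alpha_1^2\alpha_2+\alpha_2^2\alpha_3+\alpha_3^2\alpha_1,
\]
and observe directly that the $3$-cycles fix each of $\rho$ and $\rho'$, while every transposition interchanges them. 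Thus $\rho$ and $\rho'$ are the roots of the monic quadratic $x^2-(\rho+\rho')x+\rho\rho'$, whose coefficients, being symmetric in the $\alpha_i$, lie in $K$.

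The next step is to identify this quadratic with the stated resolvent. First I would compute $\rho+\rho'=\sum_{i\neq j}\alpha_i^2\alpha_j=(\sum_i\alpha_i)(\sum_{i<j}\alpha_i\alpha_j)-3\alpha_1\alpha_2\alpha_3=3c-ab$ from Vieta's formulas, which matches the linear coefficient $ab-3c$ up to sign. Then the product $\rho\rho'$ must be expanded and rewritten in terms of the elementary symmetric functions $-a,b,-c$; a direct (if tedious) reduction gives $\rho\rho'=a^3c+b^3+9c^2-6abc$, which is exactly the constant term of the resolvent. This verification, namely the symmetric-function expansion of $\rho\rho'$, is the one genuinely laborious point, and I expect it to be the main obstacle, though it is entirely mechanical.

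Finally I would bring in separability. The key identity is $\rho-\rho'=\pm(\alpha_1-\alpha_2)(\alpha_1-\alpha_3)(\alpha_2-\alpha_3)$, so that $(\rho-\rho')^2=\Disc(f)$; since $f$ is separable this discriminant is nonzero and hence $\rho\neq\rho'$. Consequently any odd permutation in $G$ genuinely moves $\rho$, sending it to $\rho'\neq\rho$, so $G\subseteq A_3$ if and only if $\rho$ is fixed by all of $G$, that is, $\rho\in K$. Because $\rho+\rho'\in K$, we have $\rho\in K$ exactly when $\rho'\in K$, and a monic quadratic with roots $\rho,\rho'$ is reducible over $K$ precisely when one (hence both) of its roots lies in $K$. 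Chaining these equivalences yields $G\subseteq A_3$ if and only if the quadratic resolvent is reducible over $K$, with no restriction on the characteristic. As a sanity check, in characteristic not two the resolvent's own discriminant equals $(\rho-\rho')^2=\Disc(f)$, so reducibility of the resolvent recovers the classical criterion that the discriminant be a square.
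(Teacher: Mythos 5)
Your proof is correct and is essentially the paper's own argument (the paper sketches it and defers details to Conrad's note): your $\rho$ and $\rho'$ are precisely the elements $\sum_{i=1}^{3}x_i^2x_{i\pm1}$ that the paper identifies as the resolvent's roots, fixed by $3$-cycles and interchanged by odd permutations, with $(\rho-\rho')^2=\Disc(f)\neq 0$ guaranteeing $\rho\neq\rho'$ in every characteristic. The one step you flag as laborious, the expansion $\rho\rho'=e_1^3e_3+e_2^3+9e_3^2-6e_1e_2e_3=a^3c+b^3+9c^2-6abc$, does check out, so nothing is missing.
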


This was given as an exercise in~\cite[page~53]{Kaplansky}, stated there in the special case where $a=0$.
Full details may be found in Keith Conrad's unpublished expository paper~\cite[Theorem~2.3]{Conrad}.
The essence of a proof is that, if $x_1,x_2,x_3$ are the roots of $x^3+ax^2+bx+c$ in a splitting field,
ordered in some arbitrary way, then the roots of the quadratic resolvent are
$\sum_{i=1}^3x_i^2x_{i\pm 1}$, with indices viewed modulo $3$,
and those are interchanged by odd permutations of  $x_1,x_2,x_3$.
Because the quadratic resolvent turns out to have the same discriminant as the cubic polynomial, this criterion leads back
to the more familiar discriminant criterion when the characteristic of the field is different from two.

\begin{lemma}\label{lemma:odd}
We have
$|\overline{U}(n,q)|=(N+A-C)/3$,
where $A$ and $C$ are as in Lemma~\ref{lemma:Ubar}, and
\begin{itemize}
\item
$N$ is the number of solutions in $\F_{q^n}\times\F_{q^n}$ of the equation $x^2+s(\beta)x+t(\beta)=0$
(in the unknowns $x$ and $\beta$),
where the left-hand side is the quadratic resolvent of $g(x)-\beta h(x)$.
\end{itemize}
If $q$ is odd, then
\begin{itemize}
\item
$N$ is the number of solutions in $\F_{q^n}\times\F_{q^n}$ of the equation $y^2-\Delta(\beta)=0$
(in the unknowns $y$ and $\beta$),
where
$\Delta(\beta)=\Disc_x\bigl(g(x)-\beta h(x)\bigr)$.
\end{itemize}
\end{lemma}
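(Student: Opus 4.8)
The plan is to evaluate $N$ by determining, for each $\beta\in\F_{q^n}$, how many roots the quadratic resolvent $Q_\beta(x)=x^2+s(\beta)x+t(\beta)$ of the monic cubic $g(x)-\beta h(x)$ has in $\F_{q^n}$, organised according to the factorization type of that cubic, and then summing these contributions. Under the standing normalization ($\deg h<3$ and $g$ monic) the polynomial $g(x)-\beta h(x)$ is monic of degree three for every $\beta$, so precisely the five patterns catalogued in Lemma~\ref{lemma:Ubar} arise: three distinct roots in $\F_{q^n}$, one root together with an irreducible quadratic factor (counted by $D$), an irreducible cubic (the case counted by $\overline{U}(n,q)$), a double root with a distinct simple root (counted by $B$), and a triple root (counted by $C$). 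I would also record at the outset the fact, noted after Theorem~\ref{thm:Conrad}, that $Q_\beta$ has the same discriminant $\Delta(\beta)$ as the cubic itself.

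For the three separable patterns I would invoke Theorem~\ref{thm:Conrad} together with the structure of finite fields. Over $\F_{q^n}$ the Galois group of a separable cubic is cyclic, generated by the Frobenius, whose action on the three roots is the identity, a $3$-cycle, or a transposition according as the cubic splits completely, is irreducible, or factors as a linear times an irreducible quadratic. In the first two cases the Galois group lies in $A_3$, so by Theorem~\ref{thm:Conrad} the resolvent $Q_\beta$ is reducible over $\F_{q^n}$; since a separable cubic has $\Delta(\beta)\neq 0$ and $Q_\beta$ shares that discriminant, $Q_\beta$ then has two distinct roots in $\F_{q^n}$. In the third case the Galois group is not contained in $A_3$, so $Q_\beta$ is irreducible and contributes no roots. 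The two inseparable patterns lie outside the scope of Theorem~\ref{thm:Conrad} and I would treat them by a direct computation of the resolvent's roots $\sum_i x_i^2 x_{i\pm 1}$: when the cubic has a double root, or a triple root, these two expressions coincide and lie in $\F_{q^n}$, so $Q_\beta$ contributes exactly one root in each of the cases counted by $B$ and by $C$.

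Writing $S$ for the number of $\beta\in\F_{q^n}$ at which $g(x)-\beta h(x)$ splits into three distinct linear factors, the root counts just assembled give
\[
N=2S+2|\overline{U}(n,q)|+B+C.
\]
Since every $\beta\in\F_{q^n}$ falls into exactly one of the five patterns, $S+|\overline{U}(n,q)|+D+B+C=q^n$; eliminating $S$ yields $N=2q^n-2D-B-C$, equivalently $2q^n-2D=N+B+C$. Feeding this into the formula of Lemma~\ref{lemma:Ubar} I obtain
\[
|\overline{U}(n,q)|=\frac{2q^n+A-B-2C-2D}{3}=\frac{N+A-C}{3},
\]
which is the first assertion.

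For the final claim I would assume $q$ odd and complete the square. Because $Q_\beta$ has discriminant $\Delta(\beta)$, the identity $Q_\beta(x)=\bigl((2x+s(\beta))^2-\Delta(\beta)\bigr)/4$ sets up, for each fixed $\beta$, a bijection $x\mapsto y=2x+s(\beta)$ between the roots $x\in\F_{q^n}$ of $Q_\beta$ and the solutions $y\in\F_{q^n}$ of $y^2=\Delta(\beta)$; this is a genuine bijection since $2\neq 0$ in $\F_{q^n}$. Summing over $\beta$ shows that $N$ equals the number of solutions of $y^2-\Delta(\beta)=0$ in $\F_{q^n}\times\F_{q^n}$, as required. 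The main obstacle I anticipate is not any single computation but the bookkeeping of the case analysis: one must be sure that the two inseparable patterns $B$ and $C$, which Theorem~\ref{thm:Conrad} does not cover, are correctly incorporated, and that the separable irreducible case is recognised as contributing two resolvent roots (not zero) precisely because over a finite field its Galois group is the cyclic group $C_3\subseteq A_3$. Once the per-$\beta$ root counts are pinned down, the remainder is routine.
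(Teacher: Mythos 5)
Your proof is correct and follows essentially the same route as the paper: Theorem~\ref{thm:Conrad} plus the cyclicity of Galois groups over finite fields to get two resolvent roots in the split and irreducible-cubic cases and none in the $D$ case, leading to $N=2q^n-B-C-2D$, combined with Lemma~\ref{lemma:Ubar}, and completing the square for odd $q$. The only cosmetic difference is that you handle the $B$ and $C$ cases by computing the resolvent's roots $\sum_i x_i^2x_{i\pm1}$ explicitly, where the paper instead observes that $\Delta(\beta)=0$ is equivalent to the resolvent having a double root in $\F_{q^n}$ (via the shared discriminant); both yield the same per-$\beta$ contribution of one solution.
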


\begin{proof}
We have already noted that $B+C$ equals the number of $\beta\in\F_{q^n}$ for which $\Delta(\beta)=0$,
which is equivalent to the quadratic resolvent having a double root in $\F_{q^n}$.

Now consider those $\beta\in\F_{q^n}$ which are not roots of the discriminant,
whence both $g(x)-\beta h(x)$ and its quadratic resolvent are separable.
According to Theorem~\ref{thm:Conrad}, the quadratic resolvent
is irreducible over $\F_{q^n}$
if and only if the Galois group of $g(x)-\beta h(x)$ over $\F_{q^n}$
is not contained in the alternating group $A_3$.
Because Galois groups over a finite field are cyclic (hence the whole $S_3$ cannot be a Galois group),
this occurs precisely when the Galois group is a cyclic group of order two, interchanging two of the roots and fixing the third root.
Hence this is equivalent with $g(x)-\beta h(x)$ having a quadratic irreducible factor in $\F_{q^n}[x]$.
Therefore, $D$ equals the number of $\beta\in\F_{q^n}$ for which the quadratic resolvent is irreducible over $\F_{q^n}$, and hence has no roots in it.

As a consequence, the remaining case, where the quadratic resolvent has two distinct roots in $\F_{q^n}$,
occurs for precisely $q^n-B-C-D$ values of $\beta\in\F_{q^n}$.

Altogether, we find $N=(B+C)+2(q^n-B-C-D)=2q^n-B-C-2D$, and combining this with Lemma~\ref{lemma:Ubar} proves the first assertion.

The second assertion, for odd $q$, is a consequence of the former because the equation $y^2-\Delta(\beta)=0$ is equivalent to the
equation involving the quadratic resultant via the substitution $x=y+s(\beta)/2$.
\end{proof}

In the rest of the paper we will apply the method described in this section to compute
$|\mathcal{I}(g,h,n,q)|$ in various cases, thus applying Lemma~\ref{lemma:odd} followed by M\"obius inversion as in Equation~\eqref{eq:Moebius}.
The equation $x^2+s(\beta)x+t(\beta)=0$ of Lemma~\ref{lemma:odd},
considered over the algebraic closure $\barF_q$, represents an algebraic curve of degree at most four.
In various instances it will be possible to count the number of $\F_{q^n}$-rational points of the curve explicitly.
When interested only in odd characteristic we will use the simpler equation $y^2-\Delta(\beta)=0$ of Lemma~\ref{lemma:odd}.

Because of the final appeal to M\"obius inversion, our results will generally acquire a more compact appearance when expressed in terms of the function
\begin{equation}\label{eq:I}
I(n,q)=\frac{1}{n}\sum_{d\mid n}\mu(d)q^{n/d},
\end{equation}
which counts the number of irreducible monic polynomials of degree $n$ in $\F_{q}[x]$,
for any positive integer $n$.
For $m$ a (positive) divisor of $n$ we have
\[
\frac{1}{n}\sum_{md\mid n}\mu(d)q^{n/d}
=\frac{1}{m}I(n/m,q^m).
\]
This remains valid even when $m$ does not divide $n$ if we stipulate that $I(n,q)$ should take the value zero whenever $n$ is not a positive integer.
In particular, writing $n_3$ for the largest power of $3$ which divides $n$,
in applications of Equation~\eqref{eq:Moebius} we will take advantage of expressing our results in terms of the function $I(n',q')$ using
\begin{equation}\label{eq:Moebius_compact}
\frac{1}{n}\sum_{d\mid n,\ 3\nmid d}\mu(d)q^{n/d}
=\frac{1}{n}\sum_{n_3d\mid n}\mu(d)q^{n/d}
=\frac{1}{n_3}I(n/n_3,q^{n_3}).
\end{equation}

To avoid several case distinctions in our results,
we will allow ourselves to use the expression for $I(n,q)$ given in Equation~\eqref{eq:I},
which is formally a polynomial in $q$, for values of $q$ which are not prime powers,
irrespectively of the lack of combinatorial interpretation.
In particular, $I(n,1)$ vanishes unless $n=1$, in which case $I(1,1)=1$.
We will also need $I(n,-1)$, which vanishes except when $n\le 2$,
in which cases $I(1,-1)=-1$ and $I(2,-1)=2$.

In some of our proofs it will be useful to consider $I(n,-q)$, but then
\begin{equation}\label{eq:I+-q}
\begin{aligned}
I(n,q)+I(n,-q)
&=
\frac{1}{n}\sum_{d\mid n}\mu(d)\bigl(1+(-1)^{n/d}\bigr)q^{n/d}
\\&=
\frac{2}{n}\sum_{2d\mid n}\mu(d)(q^2)^{n/2d}
=I(n/2,q^2),
\end{aligned}
\end{equation}
to be read as zero when $n$ is odd according to our convention.

\section{Counting polynomials in large characteristics}\label{sec:three_ram_points}

In this section we will use Lemma~\ref{lemma:odd} to compute $|\mathcal{I}(g,h,n,q)|$
for any cubic rational expression $R(x)=g(x)/h(x)$ over $\F_q$ having at most three ramification points.
According to Lemma~\ref{lemma:invariance} it suffices to do that for the representatives of
equivalence classes of such expressions $R(x)$
given in Theorem~\ref{thm:cubic_expr_finite} for characteristic at least five.
However, we will cover smaller characteristics as well in this section when possible and convenient.

The case of $R(x)=x^3$ is a special case of a result of Cohen in~\cite[Theorem~3]{Cohen:irreducible},
who counted irreducible polynomials of the form $f(x^r)$.
For completeness we provide a proof of this special case of Cohen's result, as a first illustration of
our method described in Section~\ref{sec:tools}.
Over a finite field $\F_q$ of characteristic three any polynomial $f(x^3)$ is the cube of a polynomial, hence is reducible,
and so $\mathcal{I}(x^3,1,n,q)$ is empty in this case.

\begin{theorem}\label{thm:x^3}
If $q\equiv 1\pmod{3}$, then
\[
|\mathcal{I}(x^3,1,n,q)|
=\frac{2}{3n_3}\bigl(I(n/n_3,q^{n_3})-I(n/n_3,1)\bigr).
\]
If $q\equiv -1\pmod{3}$, then
\[
|\mathcal{I}(x^3,1,n,q)|
=\frac{1}{3n_3}\bigl(I(n/2n_3,q^{2n_3})-I(n/2n_3,1)\bigr).
\]
\end{theorem}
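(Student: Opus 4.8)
The plan is to apply Lemma~\ref{lemma:odd} with $g(x)=x^3$ and $h(x)=1$, so that $g(x)-\beta h(x)=x^3-\beta$, to compute the cardinalities $|\overline{U}(m,q)|$, and then feed the result into the M\"obius inversion~\eqref{eq:Moebius}. First I would dispose of the easy invariants in Lemma~\ref{lemma:odd}: since $h$ is a nonzero constant it has no roots, so $A=0$; and since $\charac\F_q\neq 3$ (because $q\equiv\pm1\pmod 3$), the only $\beta$ for which $x^3-\beta=(x-r)^3$ is $\beta=0$, giving a triple root at $0$, so $C=1$. Thus $|\overline{U}(m,q)|=(N-1)/3$, and everything reduces to computing $N$.

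Next I would compute $N$. For odd $q$ the discriminant form of Lemma~\ref{lemma:odd} is most convenient: $\Delta(\beta)=\Disc_x(x^3-\beta)=-27\beta^2$, and $N$ is the number of pairs $(y,\beta)$ with $y^2=-27\beta^2$. Reading off $\beta=0$ (one point) and $\beta\neq 0$ (two points exactly when $-27$, equivalently $-3$, is a nonzero square in $\F_{q^m}$), the count is governed by whether $-3$ is a square, which holds if and only if $\F_{q^m}$ contains a primitive cube root of unity, i.e.\ if and only if $q^m\equiv 1\pmod 3$. For even $q$ I would instead use the quadratic-resolvent form: the resolvent of $x^3-\beta$ is $x^2+3\beta x+9\beta^2$, and the same dichotomy appears through whether $u^2+u+1$ splits over $\F_{q^m}$, again precisely when $q^m\equiv 1\pmod 3$. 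Either way
\[
N=
\begin{cases}
2q^m-1,&q^m\equiv 1\pmod 3,\\
1,&\text{otherwise,}
\end{cases}
\]
so that $|\overline{U}(m,q)|=\tfrac{2}{3}(q^m-1)$ when $q^m\equiv 1\pmod 3$ and $|\overline{U}(m,q)|=0$ otherwise. As a sanity check, this agrees with the elementary count of $\beta$ for which $x^3-\beta$ is irreducible, namely the number of non-cubes in $\F_{q^m}^\ast$.

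Finally I would carry out the M\"obius inversion~\eqref{eq:Moebius}, splitting on $q\bmod 3$. When $q\equiv 1\pmod 3$ we have $q^m\equiv 1\pmod 3$ for every $m$, so $|\overline{U}(m,q)|=\tfrac{2}{3}(q^m-1)$ throughout, and applying the compact identity~\eqref{eq:Moebius_compact} with bases $q$ and $1$ gives the first formula at once. When $q\equiv -1\pmod 3$ we have $q^m\equiv 1\pmod 3$ if and only if $m$ is even, so $|\overline{U}(m,q)|$ depends on the parity of $m$; the key maneuver is to encode this parity as
\[
|\overline{U}(m,q)|=\tfrac{1}{3}\bigl((q^m+(-q)^m)-(1+(-1)^m)\bigr),
\]
which is valid in both parities. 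Applying~\eqref{eq:Moebius_compact} separately to the four bases $q,-q,1,-1$ (using that $(-q)^{n_3}=-q^{n_3}$, since $n_3$ is a power of $3$ and hence odd) and then pairing the results through~\eqref{eq:I+-q} collapses the sums to $I(n/2n_3,q^{2n_3})-I(n/2n_3,1)$, yielding the second formula; when $n/n_3$ is odd this is automatically zero by the convention on $I$. The one step that needs care is precisely this bookkeeping in the $q\equiv-1$ case: getting the parity-dependent $|\overline{U}(m,q)|$ into a shape to which~\eqref{eq:I+-q} applies and checking that the constant terms recombine correctly; everything else is a routine substitution.
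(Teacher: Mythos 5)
Your proposal is correct and follows essentially the same route as the paper's own proof: the same application of Lemma~\ref{lemma:odd} with $A=0$, $C=1$, the same count $N=2q^m-1$ or $N=1$ according to $q^m\bmod 3$, the same unified expression for $|\overline{U}(m,q)|$ (your parity encoding is exactly the paper's formula with $\varepsilon=-1$), and the same M\"obius inversion via~\eqref{eq:Moebius_compact} and~\eqref{eq:I+-q}. The only cosmetic difference is that you split into the discriminant form for odd $q$ and the resolvent form for even $q$, whereas the paper uses the quadratic resolvent uniformly; both computations are equivalent.
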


\begin{proof}
We prepare for an application of Lemma~\ref{lemma:odd}, in the form with the quadratic resolvent, as that allows characteristic two as well.
We have $A=0$ because $h(x)=1$ has no roots, and $C=1$
because $x^3-\beta$ has a triple root only when $\beta=0$
(as the characteristic is not three here).

The quantity $N$ equals the number of solutions in $\F_{q^n}\times\F_{q^n}$ of the equation
$x^2+3\beta x+9\beta^2=0$,
where the left-hand side is the quadratic resolvent of $x^3-\beta$.
The equation represents a singular conic, consisting of two lines through the origin,
which are defined over $\F_{q^n}$ if and only if $q^n\equiv 1\pmod{3}$.
Hence we have
$N=2q^n-1$ if $q^n\equiv 1\pmod{3}$, and
$N=1$ if $q^n\equiv -1\pmod{3}$.

Hence according to Lemma~\ref{lemma:odd} we have
$|\overline{U}(n,q)|=2(q^n-1)/3$
if $q^n\equiv 1\pmod{3}$,
and $|\overline{U}(n,q)|=0$ otherwise.
Both cases can be stated together as
\[
|\overline{U}(n,q)|=\bigl(q^n+(\varepsilon q)^n-1-\varepsilon^n\bigr)/3,
\]
using the Legendre symbol
$\varepsilon=\left(\frac{q}{3}\right)$, hence $\varepsilon=\pm1$
according to whether $q\equiv\pm1\pmod{3}$.
M\"obius inversion as given in Equation~\eqref{eq:Moebius} now yields
\begin{equation}\label{eq:x^3}
\begin{aligned}
|\mathcal{I}&(x^3,1,n,q)|
=
\frac{1}{3n}\sum_{d\mid n,\ 3\nmid d}\mu(d)
\bigl(q^{n/d}+(\varepsilon q)^{n/d}-1-\varepsilon^{n/d}\bigr).
\\&=
\frac{1}{3n_3}\bigl(
I(n/n_3,q^{n_3})+I(n/n_3,\varepsilon q^{n_3})-I(n/n_3,1)-I(n/n_3,\varepsilon)
\bigr),
\end{aligned}
\end{equation}
where we have used Equation~\eqref{eq:Moebius_compact} to get the final expression.
The desired conclusions follow, using Equation~\eqref{eq:I+-q} in case $q\equiv -1\pmod{3}$.
\end{proof}

\begin{rem}\label{rem:Cohen}
The results of Theorem~\ref{thm:x^3} can be stated together as
\[
|\mathcal{I}(x^3,1,n,q)|
=\frac{2}{3kn_3}\bigl(I(n/kn_3,q^{kn_3})-I(n/kn_3,1)\bigr),
\]
where $k$ be the smallest positive integer such that $3\mid q^k-1$,
hence $k\in\{1,2\}$ and $k\equiv q\pmod{3}$.
This statement is more akin to the statement for the general case of $x^r$ given in~\cite[Theorem~3]{Cohen:irreducible}.
\end{rem}

According to Theorem~\ref{thm:cubic_expr_finite}, any cubic rational expression $R(x)$, over $\F_q$ of characteristic at least five,
having only two ramification points (over $\barF_q$)
is equivalent to either $x^3$, or $(x^3+3\sigma x)/(3x^2+\sigma)$, where $\sigma$ is any nonsquare in $\F_q$.
The following result deals with the latter case.

\begin{theorem}\label{thm:two_ram_points}
Let $\F_q$ have characteristic at least five,
and let $\sigma\in\F_q$ be a nonsquare.
If $q\equiv 1\pmod{3}$, then
\[
|\mathcal{I}(x^3+3\sigma x,3x^2+\sigma,n,q)|
=\frac{1}{3n_3}\bigl(I(n/2n_3,q^{2n_3})-I(n/2n_3,1)\bigr).
\]
If $q\equiv -1\pmod{3}$, then
\[
|\mathcal{I}(x^3+3\sigma x,3x^2+\sigma,n,q)|
=\frac{2}{3n_3}\bigl(I(n/n_3,q^{n_3})-I(n/n_3,-1)\bigr).
\]
\end{theorem}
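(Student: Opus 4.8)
The plan is to run the general procedure of Section~\ref{sec:tools} for $g(x)=x^3+3\sigma x$ and $h(x)=3x^2+\sigma$. Since the characteristic is at least five, $q$ is odd, so I would invoke the discriminant form of Lemma~\ref{lemma:odd}: writing $|\overline{U}(n,q)|=(N+A-C)/3$, where $N$ is the number of $(y,\beta)\in\F_{q^n}\times\F_{q^n}$ with $y^2=\Delta(\beta)$ and $\Delta(\beta)=\Disc_x\bigl(g(x)-\beta h(x)\bigr)$. The cubic in question is $g(x)-\beta h(x)=x^3-3\beta x^2+3\sigma x-\sigma\beta$, and a direct computation gives $\Delta(\beta)=-108\,\sigma\,(\beta^2-\sigma)^2$. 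The crucial observation is that this factors as $\Delta(\beta)=(-3\sigma)\bigl(6(\beta^2-\sigma)\bigr)^2$, so up to a perfect square $\Delta(\beta)$ is just the constant $-3\sigma$.

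Next I would count $N$ by fibering over $\beta$. When $\beta^2=\sigma$ the discriminant vanishes and the fibre contributes the single point $y=0$; otherwise $y^2=(-3\sigma)w^2$ with $w=6(\beta^2-\sigma)\ne0$ contributes $1+\chi_n(-3\sigma)$ points, where $\chi_n$ denotes the quadratic character of $\F_{q^n}$. Collecting these gives a closed form for $N$ in terms of $\chi_n(\sigma)$ and $\chi_n(-3\sigma)$. In parallel I would record $A=1+\chi_n(-3\sigma)$, since the roots of $h$ satisfy $x^2=-\sigma/3$ and $-\sigma/3$ differs from $-3\sigma$ by the square $1/9$, together with $C=1+\chi_n(\sigma)$, since a triple root of $g(x)-\beta h(x)$ forces $\beta^2=\sigma$, in which case the cubic is exactly $(x-\beta)^3$.

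The key arithmetic input is the evaluation of these two characters. Because $\sigma$ is a nonsquare in $\F_q$, it becomes a square in $\F_{q^n}$ exactly when $n$ is even, so $\chi_n(\sigma)=(-1)^n$; and $-3$ is a square in $\F_q$ precisely when $q\equiv 1\pmod 3$ (equivalently, when $\F_q$ contains a primitive cube root of unity), so $\chi_n(-3)=\varepsilon^n$ with $\varepsilon=\pm1$ according to $q\equiv\pm1\pmod 3$. Hence $\chi_n(-3\sigma)=(-\varepsilon)^n$. Substituting into $|\overline{U}(n,q)|=(N+A-C)/3$ and simplifying, I expect the uniform expressions
\[
|\overline{U}(n,q)|=\tfrac13\bigl(q^n+(-q)^n-1-(-1)^n\bigr)\quad(q\equiv 1),\qquad
|\overline{U}(n,q)|=\tfrac23\bigl(q^n-(-1)^n\bigr)\quad(q\equiv -1),
\]
the $A-C$ contribution vanishing when $q\equiv 1$ and supplying the parity correction when $q\equiv -1$.

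Finally I would feed these into the M\"obius inversion~\eqref{eq:Moebius}, applying the compact identity~\eqref{eq:Moebius_compact} to each of the powers $q^{n/d}$, $(-q)^{n/d}$, $1$ and $(-1)^{n/d}$ separately, and then collapsing the resulting pairs of $I$-values with~\eqref{eq:I+-q}: for $q\equiv 1$ one pairs $I(n/n_3,q^{n_3})$ with $I(n/n_3,-q^{n_3})$, and $I(n/n_3,1)$ with $I(n/n_3,-1)$, whereas for $q\equiv -1$ no pairing is needed. This should yield the two stated formulas. I expect the main obstacle to be the sign bookkeeping: every quantity ($N$, $A$, $C$) depends jointly on the parity of $n$ and on $q\bmod 3$ through the characters above, and the delicate point is verifying $\chi_n(-3)=\varepsilon^n$ and then checking that the $A-C$ term behaves differently in the two congruence classes, so that the two cases genuinely produce the asymmetric formulas claimed.
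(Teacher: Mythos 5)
Your proposal is correct and follows essentially the same route as the paper's proof: the discriminant form of Lemma~\ref{lemma:odd} with $\Delta(\beta)=-108\sigma(\beta^2-\sigma)^2$, the values $A=1+(-\varepsilon)^n$ and $C=1+(-1)^n$, the fibre-wise count of $N$ via $\chi_n(-3\sigma)=(-\varepsilon)^n$ and $\chi_n(\sigma)=(-1)^n$, and M\"obius inversion collapsed with Equation~\eqref{eq:I+-q}. Your case-split closed forms for $|\overline{U}(n,q)|$ agree exactly with the paper's single formula $\bigl(q^n+(-\varepsilon q)^n-(-1)^n-\varepsilon^n\bigr)/3$ in both congruence classes.
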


\begin{proof}
The polynomial $(x^3+3\sigma x)-\beta(3x^2+\sigma)$ has quadratic resolvent
$x^2-6\beta\sigma x+9\sigma(3\beta^4-5\sigma\beta^2+3\sigma^2)$,
but because the characteristic is odd it will be more convenient to work directly with its discriminant
$\Delta(\beta)=-108\sigma(\beta^2-\sigma)^2$,
and apply Lemma~\ref{lemma:odd} in its second formulation.

The number $A$ of roots of $3x^2+\sigma$ in $\F_{q^n}$ is given by
$A=1+(-\varepsilon)^n$,
in terms of the Legendre symbol
$\varepsilon=\left(\frac{q}{3}\right)$.
Because the polynomial $(x^3+3\sigma x)-\beta(3x^2+\sigma)$ (as a polynomial in $x$, for a given value of $\beta$) has a triple root in $\F_{q^n}$
if and only if $\beta^2=\sigma$, we have $C=1+(-1)^n$.

The number $N$ of solutions in $\F_{q^n}\times\F_{q^n}$ of the quartic equation
$y^2+108\sigma(\beta^2-\sigma)^2=0$
can be computed by assigning values for $\beta$ in $\F_{q^n}$ and then solving for $y$.
For a given value of $\beta$, the equation has one solution if $\beta^2=\sigma$, and $1+(-\varepsilon)^n$ solutions otherwise.
Hence
$N=2+(q^n-2)\bigl(1+(-\varepsilon)^n\bigr)$
if $n$ is even, and
$N=q^n\bigl(1+(-\varepsilon)^n\bigr)$
if $n$ is odd.
In a single formula,
$N=q^n+(-\varepsilon q)^n-(-\varepsilon)^n-\varepsilon^n$.

According to Lemma~\ref{lemma:odd} we find
\[
|\overline{U}(n,q)|=\bigl(q^n+(-\varepsilon q)^n-(-1)^n-\varepsilon^n\bigr)/3.
\]
Applying M\"obius inversion as in Equation~\eqref{eq:Moebius}, and then Equation~\eqref{eq:Moebius_compact}, we find
\begin{equation*}
\begin{aligned}
|\mathcal{I}&(x^3+3\sigma x,3x^2+\sigma,n,q)|
\\&=
\frac{1}{3n_3}\bigl(
I(n/n_3,q^{n_3})+I(n/n_3,-\varepsilon q^{n_3})-I(n/n_3,-1)-I(n/n_3,\varepsilon)
\bigr).
\end{aligned}
\end{equation*}
The desired conclusions follow, using Equation~\eqref{eq:I+-q} in case $q\equiv 1\pmod{3}$.
\end{proof}

Note that if $\sigma$ is a nonzero square in $\F_q$ (for example $\sigma=1$), rather than a nonsquare, then
$|\mathcal{I}(x^3+3\sigma x,3x^2+\sigma,n,q)|
=|\mathcal{I}(x^3,1,n,q)|$,
because then the cubic rational expression
$(x^3+3\sigma x)/(3x^2+\sigma)$ is equivalent to $x^3$ over $\F_q$.
This suggests an alternate way of recovering Theorem~\ref{thm:x^3} (for odd $q$),
by adapting the above proof to $\sigma$ a nonzero square in $\F_q$ instead of a nonsquare.

\begin{rem}\label{rem:I_empty}
According to Theorem~\ref{thm:x^3}, the set $\mathcal{I}(x^3,1,n,q)$ is empty if $q^n\equiv -1\pmod{3}$.
A more direct reason is that $f(x^3)$ cannot be irreducible in that case
because $x^3$ induces a permutation of $\F_{q^n}$,
which of course commutes with the map $\alpha\mapsto\alpha^q$.
Consequently, writing
$f(x)=(x-\alpha)(x-\alpha^q)\cdots(x-\alpha^{q^{n-1}})$
for some $\alpha\in\F_{q^n}$,
the polynomial
$(x-\beta)(x-\beta^q)\cdots(x-\beta^{q^{n-1}})$,
is a factor of $f(x^3)$,
where $\beta$ is the unique cubic root of $\alpha$ in $\F_{q^n}$.
Here we may even be more explicit, as $\beta=\alpha^{(2q^n-1)/3}$.

Similarly, according to Theorem~\ref{thm:two_ram_points} the set
$\mathcal{I}(x^3+3\sigma x,3x^2+\sigma,n,q)$ is empty if $q\equiv 1\pmod{3}$ and $n$ is odd.
This can also be explained directly noting that under those conditions the expression
$R(x)=(x^3+3\sigma x)/(3x^2+\sigma)$
induces a permutation of $\F_{q^n}$,
as can be seen by solving the equation $R(x)=R(y)$ and finding that for $x\neq y$ it is equivalent to
$-3(xy-\sigma)^2=\sigma(x-y)^2$.
Consequently, one of the factors of $f_R(x)$ over $\F_q$, with $f(x)$ as before, has a root $\gamma\in\F_{q^n}$ determined by
$R(\gamma)=\alpha$.
\end{rem}

Now we turn our attention to cubic rational expressions $R(x)$ with three ramification points.
According to Theorem~\ref{thm:cubic_expr_finite} those are equivalent to either $x^3-3x$ or $x^3-3\sigma x$, where $\sigma$ is a nonsquare in $\F_q$.
However, this distinction will actually be immaterial for our count of irreducible polynomials, and so we will treat both cases together.
The special case of the following result where $R(x)=x^3-3x$ is~\cite[Theorem~4]{Chu:cubic}.

\begin{theorem}\label{thm:three_ram_points}
Let $\F_q$ have characteristic at least five, and let $\tau$ be a nonzero element of $\F_q$.
Then
\[
|\mathcal{I}(x^3-\tau x,1,n,q)|
=\frac{1}{3n_3}\bigl(I(n/n_3,q^{n_3})-I(n/n_3,\varepsilon)\bigr).
\]
where $\varepsilon=\left(\frac{q}{3}\right)$ is a Legendre symbol.
\end{theorem}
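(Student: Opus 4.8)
The plan is to apply Lemma~\ref{lemma:odd} in its second formulation, which is available because the characteristic is at least five and hence odd, and then invert via Equation~\eqref{eq:Moebius}. Here $g(x)=x^3-\tau x$ and $h(x)=1$, so $g(x)-\beta h(x)=x^3-\tau x-\beta$, with discriminant $\Delta(\beta)=\Disc_x(x^3-\tau x-\beta)=4\tau^3-27\beta^2$. Since $h(x)=1$ has no roots we have $A=0$; and a triple root $x^3-\tau x-\beta=(x-r)^3$ would force $r=0$ and then $\tau=0$, contrary to hypothesis, so $C=0$. Thus Lemma~\ref{lemma:odd} gives $|\overline{U}(n,q)|=N/3$, where $N$ is the number of $\F_{q^n}$-points of the affine conic $y^2=4\tau^3-27\beta^2$.

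First I would count these points by passing to the projective closure $y^2+27\beta^2=4\tau^3z^2$ in $\Proj^2$. Its defining quadratic form is nondegenerate, since its diagonal matrix has determinant $-108\tau^3\neq 0$ (using $\tau\neq 0$ and that the characteristic divides neither $2$ nor $3$), so the projective conic is smooth and therefore has exactly $q^n+1$ rational points over $\F_{q^n}$. It remains to subtract the points at infinity, where $z=0$ forces $y^2=-27\beta^2$; these exist precisely when $-27$, equivalently $-3$, is a square in $\F_{q^n}$, in which case there are two of them and otherwise none. The decisive computation is thus to decide when $-3$ is a square in $\F_{q^n}$: this holds if and only if $q^n\equiv 1\pmod 3$, i.e.\ if and only if $\varepsilon^n=1$, because in a field of characteristic different from $2$ and $3$ the element $\sqrt{-3}$ lies in the field exactly when the primitive cube roots of unity do. Consequently the number of points at infinity equals $1+\varepsilon^n$, and $N=(q^n+1)-(1+\varepsilon^n)=q^n-\varepsilon^n$.

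Hence $|\overline{U}(n,q)|=(q^n-\varepsilon^n)/3$, and the final step is to substitute this into Equation~\eqref{eq:Moebius} and simplify using Equation~\eqref{eq:Moebius_compact} applied separately to the $q^n$-term and the $\varepsilon^n$-term (noting that $n_3$ is odd, so $\varepsilon^{n_3}=\varepsilon$). This yields exactly the claimed formula $\tfrac{1}{3n_3}\bigl(I(n/n_3,q^{n_3})-I(n/n_3,\varepsilon)\bigr)$. The main obstacle is the conic point count, and specifically pinning down the number of points at infinity in terms of $\varepsilon$; as a cross-check I would also evaluate $N$ directly as the character sum $N=q^n+\sum_{\beta\in\F_{q^n}}\chi(4\tau^3-27\beta^2)=q^n-\chi(-27)=q^n-\varepsilon^n$, where $\chi$ is the quadratic character of $\F_{q^n}$ and the standard formula for $\sum_\beta\chi(a\beta^2+c)$ with $a,c\neq 0$ is used. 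Finally I observe that the outcome does not depend on whether $\tau$ is a square, since $\tau$ enters only through the nonzero constant $4\tau^3$ while the points at infinity depend solely on $-27$; this is precisely why the representatives $x^3-3x$ and $x^3-3\sigma x$ of Theorem~\ref{thm:cubic_expr_finite} can be handled simultaneously.
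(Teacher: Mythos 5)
Your proposal is correct and follows essentially the same route as the paper's own proof: apply Lemma~\ref{lemma:odd} in its discriminant form with $A=C=0$, count $N$ by viewing $y^2+27\beta^2-4\tau^3=0$ as a non-singular conic with $q^n+1$ projective points and $1+\varepsilon^n$ points at infinity, and finish by M\"obius inversion via Equation~\eqref{eq:Moebius_compact}. Your added justifications (non-degeneracy of the quadratic form, the criterion for $-3$ being a square, and the character-sum cross-check) are all accurate elaborations of steps the paper leaves implicit.
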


\begin{proof}
Because the characteristic is odd we will apply Lemma~\ref{lemma:odd} in the form with the discriminant, which in case of
the polynomial $x^3-\tau x-\beta$ is $\Delta(\beta)=4\tau^3-27\beta^2$.
Both quantities $A$ and $C$ vanish here, and it remains to compute the number $N$ of solutions in $\F_{q^n}\times\F_{q^n}$ of the equation
$y^2+27\beta^2-4\tau^3=0$ (in the unknowns $y$ and $\beta$).
Because this represents a non-singular conic, it will have precisely $q^n+1$ points in the projective plane over $\F_{q^n}$,
and subtracting the $1+\varepsilon^n$ points at infinity we find $N=q^n-\varepsilon^n$.
Hence
\[
|\overline{U}(n,q)| = \frac{q^n-\varepsilon^n}{3},
\]
and the conclusion follows after M\"obius inversion.
\end{proof}

For cubic rational expressions $R(x)=g(x)/h(x)$ with four rational points
we can only produce an estimate for $|\mathcal{I}(g,h,n,q)|$.
This is because $|\mathcal{I}(g,h,n,q)|$ turns out to depend on the number of $\F_q$-rational points of a curve of genus one associated with $R(x)$.
Because a cubic rational expression $R(x)$ over a field of characteristic two cannot have four ramification points
(see~\cite[Section~4]{MatPiz:cubic-maps}, for example),
in the following result $\F_q$ has necessarily odd characteristic.

\begin{theorem}\label{thm:four_ram_points}
Let $R(x)=g(x)/h(x)$ be a cubic rational expression over $\F_q$ with four ramification points (over $\barF_q$).
Then $|\mathcal{I}(g,h,n,q)|= q^n/3n + O(q^{n/2}/n)$.
\end{theorem}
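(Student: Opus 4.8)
The plan is to feed the curve attached to $R(x)$ into Lemma~\ref{lemma:odd} and then estimate its number of rational points by the Hasse--Weil bound. As remarked before the statement, a cubic rational expression can have four ramification points only in odd characteristic, so I may work with the second (discriminant) formulation of Lemma~\ref{lemma:odd}. This reduces matters to the quantity $N$ counting the solutions $(y,\beta)\in\F_{q^n}\times\F_{q^n}$ of $y^2=\Delta(\beta)$, where $\Delta(\beta)=\Disc_x\bigl(g(x)-\beta h(x)\bigr)$. The two remaining ingredients $A$ and $C$ of that lemma are bounded independently of $n$: after the normalization of Section~\ref{sec:tools} one has $\deg h<3$, so $A\le 2$, while $C$ counts those $\beta$ for which $g(x)-\beta h(x)$ has a triple root and is therefore $O(1)$. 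Hence $|\overline{U}(n,q)|=N/3+O(1)$.

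The geometric input is that four ramification points force $\Delta(\beta)$ to be squarefree of degree three or four, so that the plane curve $y^2=\Delta(\beta)$ is geometrically irreducible and its smooth projective model has genus one, as recorded in~\cite{MatPiz:cubic-maps}. (The squarefreeness amounts to the branch points being distinct, which is immediate since a monic cubic $g(x)-\beta h(x)$ cannot have two double roots, so distinct finite ramification points give distinct finite branch points.) Applying the Hasse--Weil bound over $\F_{q^n}$ to this genus-one curve shows that its smooth model has $q^n+1-a_n$ rational points with $|a_n|\le 2q^{n/2}$; the affine count $N$ differs from this by the bounded contribution of the points at infinity and of the finitely many $\beta$ with $\Delta(\beta)=0$. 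Therefore $N=q^n+O(q^{n/2})$, and combining with the previous paragraph gives $|\overline{U}(n,q)|=q^n/3+O(q^{n/2})$.

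It remains to run M\"obius inversion in the form of Equation~\eqref{eq:Moebius}. The term $d=1$ contributes $|\overline{U}(n,q)|/n=q^n/(3n)+O(q^{n/2}/n)$, which is the asserted main term. Every other term has $d\ge 2$, hence $n/d\le n/2$ and $|\overline{U}(n/d,q)|=O(q^{n/2})$; since the number of divisors of $n$ grows more slowly than any power $q^{\epsilon n}$, while the contributions beyond $d=2$ decay like $q^{n/3}$, the whole tail is $O(q^{n/2}/n)$ and is absorbed into the error term. This yields $|\mathcal{I}(g,h,n,q)|=q^n/(3n)+O(q^{n/2}/n)$, as claimed.

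The step I expect to be the main obstacle is the geometric claim that the associated curve is geometrically irreducible of genus one, equivalently that $\Delta(\beta)$ is squarefree of degree at least three; this is exactly the feature distinguishing the four-ramification-point case from the genus-zero cases treated in Theorems~\ref{thm:x^3}--\ref{thm:three_ram_points}, and it is where the cited classification in~\cite{MatPiz:cubic-maps} does the real work. Once that is in hand, the application of Hasse--Weil and the divisor-sum bookkeeping are routine.
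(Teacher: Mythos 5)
Your proposal is correct and follows essentially the same path as the paper's proof: Lemma~\ref{lemma:odd} in its discriminant form, the observation that four ramification points force $\Delta(\beta)$ to be squarefree of degree three or four (your remark that a monic cubic cannot have two double roots is exactly the argument the paper leaves to the reader), absolute irreducibility and genus one for the curve $y^2=\Delta(\beta)$, Hasse--Weil, and M\"obius inversion with the $d\ge 2$ tail absorbed into the error term. The only divergence is in the degree-four case, where the paper stays on the singular plane quartic and invokes the Aubry--Perret bound $|N-q^n|\le 6\sqrt{q^n}$, whereas you pass to the smooth projective model and absorb the bounded discrepancy at the singular point at infinity; both are valid and yield the same $O(q^{n/2}/n)$ estimate.
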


\begin{proof}
An application of Lemma~\ref{lemma:odd} involves the number $N$ of solutions in $\F_{q^n}\times\F_{q^n}$
of the equation $y^2-\Delta(\beta)=0$.
Recall that $\Delta(\beta)$ is a polynomial of degree at most four,
and that its roots are precisely the finite ramification points of $R(x)$.
If $\infty$ is a ramification point of $R(x)$, then $h(x)$ has degree less than three,
and then one sees that $\Delta(\beta)$ has degree at most three.
Because we are assuming that $R(x)$ has four ramification points, $\Delta(\beta)$ is a cubic polynomial with distinct roots.
However, if $\infty$ is not a ramification point of $R(x)$, then $\Delta(\beta)$ is a quartic polynomial with distinct roots.

Now consider the plane curve with affine equation $y^2=\Delta(\beta)$,
which is absolutely irreducible in either case.
If $\Delta(\beta)$ has degree three, then the projective version of the curve is a non-singular curve of genus one,
whose number of its $\F_{q^n}$-rational points equals $N+1$.
The Hasse-Weil bound then gives
\[
|N-q^n|\le 2\sqrt{q^n}.
\]
Together with Equation~\eqref{eq:Moebius} this yields $|\mathcal{I}(g,h,n,q)|= q^n/3n + O(q^{n/2}/n)$, as desired.

If $\Delta(\beta)$ has degree four, then the projective version of the curve has a singularity in the point at infinity.
According to~\cite[Corollary~2.5]{Aubry-Perret} we have the weaker bound
\[
|N-q^n|\le 6\sqrt{q^n}.
\]
Here the coefficient $6$ results from having replaced the (geometric) genus of the curve (which is still $1$ according to Pl\"ucker's formula)
with its arithmetic genus (which is $(d-1)(d-2)/2=3$, where $d=4$ is the degree of the curve).
Together with Equation~\eqref{eq:Moebius} this yields the same asymptotic estimate of
$|\mathcal{I}(g,h,n,q)|$.
\end{proof}

Because of the exact formulas of Theorem~\ref{thm:three_ram_points},
$|\mathcal{I}(g,h,n,q)|$ satisfies the same estimate of Theorem~\ref{thm:four_ram_points}
when $R(x)=g(x)/h(x)$ has three ramification points.
However, according to Theorems~\ref{thm:x^3} and~\ref{thm:two_ram_points} the asymptotic estimate becomes twice as large in case $R(x)$ has just two ramification points.

\section{Results in characteristic three}\label{sec:char_three}

A classification of cubic rational expressions $R(x)$ up to equivalence, analogous to that of
Theorem~\ref{thm:cubic_expr_finite}
but over a finite field of characteristic three,
was also obtained in~\cite{MatPiz:cubic-maps},
under the same restriction that $R(x)$ has at most three ramification points.
It reads as follows.

\begin{theorem}[\cite{MatPiz:cubic-maps}, Theorem~18]\label{thm:cubic_expr_finite_3}
Let $\F_q$ be a finite field of characteristic three,
and let $\sigma\in\F_q$ be a nonsquare.
Then any cubic rational expression $R(x)$ over $\F_q$ with at most three ramification points is equivalent to
either $x^3+x^2$, or $x^3+x$, or $x^3+\sigma x$.
\end{theorem}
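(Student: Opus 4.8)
The plan is to read off the possible ramification configurations of a separable cubic map in characteristic three and then normalize each one by equivalence. First I would dispose of the inseparable case: a purely inseparable cubic expression in characteristic three is equivalent to the Frobenius $x^3$, for which the polynomial $g'(x)h(x)-g(x)h'(x)$ vanishes identically; such an expression does not meet the hypothesis of having at most three ramification points, so I may assume $R(x)$ separable.

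Next I would apply the Riemann--Hurwitz formula to the separable degree-three cover $R\colon\Proj^1\to\Proj^1$. Since source and target both have genus zero,
\[
-2=3\cdot(-2)+\deg\mathfrak{d},
\]
so the different $\mathfrak{d}$ has degree $4$. A ramification point has index $2$ or $3$: an index-$2$ point is tame and contributes $1$ to $\deg\mathfrak{d}$, whereas an index-$3$ point is wild and contributes at least $3$. Were all ramification points tame there would be exactly four of them, contrary to hypothesis; hence $R(x)$ has a (necessarily unique) totally ramified point, of index $3$. Tallying the different then leaves only two possibilities compatible with at most three ramification points: a single wild point (different $4$), or one wild point (different $3$) together with one tame point (different $1$). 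In particular exactly three ramification points never occur.

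I would then invoke Lemma~\ref{lemma:invariance} to move the totally ramified point and its image to $\infty$, so that $R(x)$ becomes a cubic polynomial, which an affine post-composition normalizes to $R(x)=x^3+a_2x^2+a_1x$. In characteristic three $R'(x)=-a_2x+a_1$. If $a_2=0$ then $\infty$ is the only ramification point and separability forces $a_1\neq0$; here the residual freedom consists of a substitution $x\mapsto\alpha x+\beta$ followed by an affine map, and since $(\alpha x+\beta)^3=\alpha^3x^3+\beta^3$ in characteristic three the quadratic term stays absent and $a_1$ is merely rescaled by the square $\alpha^{-2}$. Thus $R(x)$ reduces to $x^3+x$ or $x^3+\sigma x$ according to the square class of $a_1$. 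If instead $a_2\neq0$ there is one further, tame, ramification point; a shift kills $a_1$ and a rescaling, now multiplying $a_2$ by an arbitrary nonzero scalar, reduces $R(x)$ to $x^3+x^2$.

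The two points I expect to require the most care are the wild-ramification bookkeeping---namely that an index-three point contributes at least $3$ to the different, which is what rules out three ramification points and pins down the two surviving shapes---and the passage from $\barF_q$ to $\F_q$. The latter is clean here because each distinguished point (the unique totally ramified point, its image, and the unique tame point) is intrinsically characterized, hence fixed by the Frobenius and therefore $\F_q$-rational, so every normalizing M\"obius map may be chosen over $\F_q$. The only honest $\F_q$-invariant separating inequivalent normal forms is the square class of $a_1$ in the one-ramification-point case, which is exactly what splits $x^3+x$ from $x^3+\sigma x$; this contrasts with the coefficient of $x^2$ in the two-point case, which rescales by an arbitrary scalar and so collapses to the single class $x^3+x^2$.
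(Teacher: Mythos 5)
Your proof is correct, but note that it cannot be compared against an argument in this paper: the statement is quoted here verbatim from~\cite{MatPiz:cubic-maps} (Theorem~18 there) with no proof reproduced, so yours must stand on its own, and it does. The load-bearing steps are all sound. Riemann--Hurwitz with the different gives $\deg\mathfrak{d}=4$; in characteristic three an index-$2$ point is tame with local contribution exactly $1$, while an index-$3$ point is wild with contribution $d_P\ge e_P=3$ (the standard inequality characterizing wild ramification), so the all-tame configuration forces exactly four ramification points (excluded by hypothesis), two wild points would overshoot $4$, and only the configurations $(3)$ and $(3,2)$ survive --- in agreement with the ramification data this paper records right after the theorem ($x^3+x^2$ with indices $(3,2)$; $x^3+x$ and $x^3+\sigma x$ with $\infty$ as sole ramification point, and no separable cubic in characteristic three having exactly three ramification points). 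Your rationality step is handled properly: the totally ramified point and its image are unique, hence Frobenius-fixed, hence $\F_q$-rational, so the normalizing M\"obius maps can be taken over $\F_q$; and the computations $(\alpha x+\beta)^3=\alpha^3x^3+\beta^3$, the action $a_1\mapsto\alpha^{-2}a_1$, and the free rescaling of $a_2$ correctly produce the three normal forms, with the square class of $a_1$ separating $x^3+x$ from $x^3+\sigma x$. Two small remarks: the theorem only asserts that every such $R(x)$ is equivalent to one of the three forms, so your inequivalence discussion is a bonus; to make it fully rigorous, add the one-line observation that for an equivalence $\tilde R=B\circ R\circ A$ the map $A$ carries the ramification points of $\tilde R$ bijectively onto those of $R$ and $B$ carries the branch points of $R$ onto those of $\tilde R$, so in the one-point case both $A$ and $B$ must fix $\infty$ and are affine --- this is what your phrase ``intrinsically characterized'' is gesturing at. Your dismissal of the inseparable case also matches the paper's own remark that such expressions are ramified everywhere (so fall outside the hypothesis) and are all equivalent to $x^3$.
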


The expression $x^3+x^2$ has two ramification points, namely, $\infty$ with index three and $0$ with index two.
Every cubic rational expression, over $\F_q$ of characteristic three, with precisely two ramification points, is equivalent to $x^3+x^2$.
Each of the expressions $x^3+x$ and $x^3+\sigma x$ has $\infty$ as unique ramification point, with index three.
Every cubic rational expression, over $\F_q$ of characteristic three, with precisely one ramification point, is equivalent to one of those.
Also, those two become equivalent to each other over $\F_{q^2}$.
Note that the assumption of having at most three ramification points in Theorem~\ref{thm:cubic_expr_finite_3}
excludes the inseparable cubic rational expressions, which are ramified everywhere.
Those are easily seen to be all equivalent to $x^3$.

Now we apply the method of Section~\ref{sec:char_three} to compute $|\mathcal{I}(g,h,n,q)|$
for the various representatives of Theorem~\ref{thm:cubic_expr_finite_3}.
As mentioned before Theorem~\ref{thm:x^3}, the set $\mathcal{I}(x^3,1,n,q)$ is empty when $q$ is a power of three.
If $R(x)$ has four distinct ramification points we generally only have the estimate of Theorem~\ref{thm:four_ram_points},
so it remains to cover the cases where $R(x)$ equals $x^3+x^2$, or $x^3-x$, or $x^3-\sigma x$,
where $\sigma$ is a nonsquare in $\F_q$.

\begin{theorem}\label{thm:char_three_(3,2)}
Over the field $\F_q$ of characteristic three we have
\[
|\mathcal{I}(x^3+x^2,1,n,q)|
=\frac{1}{3n_3}I(n/n_3,q^{n_3}).
\]
\end{theorem}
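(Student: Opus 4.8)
The plan is to apply Lemma~\ref{lemma:odd} with $g(x)=x^3+x^2$ and $h(x)=1$. Since the characteristic is three the field $\F_q$ has odd order, so both formulations of the lemma are available, and I would use the one involving the discriminant. Two of the three inputs are immediate. Because $h(x)=1$ has no roots, $A=0$. And $C=0$, because in characteristic three a monic cubic with a triple root $\rho$ equals $(x-\rho)^3=x^3-\rho^3$, which carries no quadratic term, whereas $x^3+x^2-\beta$ has quadratic coefficient $1$ and so can never acquire a triple root.

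The substance is the count $N$ of solutions of $y^2=\Delta(\beta)$. The decisive feature of characteristic three is that the discriminant degenerates in degree: evaluating $\Disc_x(x^3+x^2-\beta)$ and reducing modulo three collapses the generic quartic to the linear polynomial $\Delta(\beta)=\beta$, reflecting that $R(x)=x^3+x^2$ has a single finite branch point $R(0)=0$, the image of its unique finite ramification point. The curve $y^2=\beta$ is then a graph over the $y$-line, so each $y\in\F_{q^n}$ determines a unique $\beta=y^2$ and $N=q^n$ exactly, with no error term. Substituting $A=C=0$ and $N=q^n$ into Lemma~\ref{lemma:odd} gives $|\overline{U}(n,q)|=q^n/3$.

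To finish I would apply M\"obius inversion in the form of Equation~\eqref{eq:Moebius}, obtaining $\frac{1}{3n}\sum_{d\mid n,\,3\nmid d}\mu(d)q^{n/d}$, and then repackage this using Equation~\eqref{eq:Moebius_compact} to reach the stated $\frac{1}{3n_3}I(n/n_3,q^{n_3})$. In contrast to the four-ramification-point case of Theorem~\ref{thm:four_ram_points}, there is no real obstacle here: the only step needing care is verifying the collapse of the discriminant to degree one in characteristic three, which I would check by direct computation, since it is precisely this degeneration that renders the associated curve rational rather than of genus one and makes the formula exact. As a consistency check I would also note that the unique zero $\beta=0$ of $\Delta$ corresponds to the double root of $x^3+x^2=x^2(x+1)$, so that $\beta=0$ is counted by $B$ rather than $C$, confirming $C=0$ and keeping the bookkeeping consistent.
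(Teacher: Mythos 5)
Your proposal is correct and follows exactly the paper's own route: Lemma~\ref{lemma:odd} in its discriminant form with $A=C=0$, the computation $\Delta(\beta)=\beta$ in characteristic three giving $N=q^n$, hence $|\overline{U}(n,q)|=q^n/3$, followed by M\"obius inversion via Equations~\eqref{eq:Moebius} and~\eqref{eq:Moebius_compact}. Your added justifications (why no triple root can occur, and that $\beta=0$ is counted by $B$ rather than $C$) are accurate details the paper's terse proof leaves implicit.
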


\begin{proof}
Both quantities $A$ and $C$ of Lemma~\ref{lemma:odd} vanish, and the polynomial $x^3+x^2-\beta$ has discriminant $\Delta(\beta)=\beta$.
The number $N$ of solutions in $\F_{q^n}\times\F_{q^n}$ of the equation $y^2-\beta=0$ equals $q^n$.
Lemma~\ref{lemma:odd} yields $|\overline{U}(n,q)| = q^n/3$,
and the conclusion follows by M\"obius inversion as given by Equation~\eqref{eq:Moebius}.
\end{proof}

\begin{theorem}\label{thm:char_three_(3)}
Let $\F_q$ have characteristic three,
and let $\sigma\in\F_q$ be a nonsquare.
Then
\[
|\mathcal{I}(x^3-x,1,n,q)|
=\frac{2}{3n_3}I(n/n_3,q^{n_3}),
\]
and
\[
|\mathcal{I}(x^3-\sigma x,1,n,q)|
=\frac{1}{3n_3}I(n/2n_3,q^{2n_3}).
\]
\end{theorem}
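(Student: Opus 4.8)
The plan is to treat both expressions uniformly by setting $R(x)=x^3-\tau x$ with $\tau$ a fixed nonzero element of $\F_q$, so that $g(x)=x^3-\tau x$ and $h(x)=1$, and to apply Lemma~\ref{lemma:odd} in its discriminant form, which is available since characteristic three is odd. As in the proof of Theorem~\ref{thm:three_ram_points}, we have $A=0$ because $h(x)=1$ has no roots. For $C$, a triple root of $x^3-\tau x-\beta$ would force this polynomial to equal $(x-r)^3$; but in characteristic three $(x-r)^3=x^3-r^3$ has no linear term, whereas $-\tau\neq 0$, so no value of $\beta$ produces a triple root and $C=0$. The decisive simplification is the discriminant: $\Delta(\beta)=4\tau^3-27\beta^2$ collapses to the constant $\tau^3$ in characteristic three, since $27\equiv 0$ and $4\equiv 1$. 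In particular $\Delta(\beta)$ never vanishes, confirming $B=C=0$.

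Because $\Delta(\beta)=\tau^3$ does not depend on $\beta$, counting the solutions $(y,\beta)$ of $y^2=\tau^3$ reduces to counting, for each of the $q^n$ choices of $\beta$, the roots $y$ of $y^2=\tau^3$. Since $\tau\neq 0$, this yields two roots when $\tau^3$ --- equivalently $\tau$ --- is a square in $\F_{q^n}$, and none otherwise; hence $N\in\{2q^n,0\}$. For $\tau=1$ the element is always a square, so $N=2q^n$ for every $n$. For $\tau=\sigma$ a nonsquare of $\F_q$, the standard computation $\sigma^{(q^n-1)/2}=(-1)^n$ (using that $q$ is odd) shows $\sigma$ is a square in $\F_{q^n}$ exactly when $n$ is even, so that $N=q^n+(-q)^n$, which equals $2q^n$ for $n$ even and $0$ for $n$ odd. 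In either case Lemma~\ref{lemma:odd} gives $|\overline{U}(n,q)|=N/3$.

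It then remains to feed these values into M\"obius inversion as in Equation~\eqref{eq:Moebius}, using Equation~\eqref{eq:Moebius_compact} to repackage the sums in terms of $I$. For $\tau=1$ this produces $\frac{2}{3n_3}I(n/n_3,q^{n_3})$ immediately. For $\tau=\sigma$ I would write $N/3=(q^n+(-q)^n)/3$ and invert each term separately; since $n_3$ is a power of three, hence odd, Equation~\eqref{eq:Moebius_compact} applied with base $-q$ yields $\frac{1}{n_3}I(n/n_3,-q^{n_3})$, and summing the two contributions gives $\frac{1}{3n_3}\bigl(I(n/n_3,q^{n_3})+I(n/n_3,-q^{n_3})\bigr)$. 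A final application of Equation~\eqref{eq:I+-q} collapses this to $\frac{1}{3n_3}I(n/2n_3,q^{2n_3})$, the claimed formula.

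I expect no serious obstacle here: the genuine content is the observation that the discriminant degenerates to a nonzero constant in characteristic three, after which everything becomes a square-versus-nonsquare dichotomy. The only place demanding care is the bookkeeping of the quadratic character of $\sigma$ across the extensions $\F_{q^n}$ and the matching M\"obius-inversion identity, both of which are routine given Equations~\eqref{eq:Moebius_compact} and~\eqref{eq:I+-q}.
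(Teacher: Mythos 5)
Your proposal is correct and follows essentially the same route as the paper's proof: both handle $\tau\in\{1,\sigma\}$ uniformly, note $A=C=0$, observe that the discriminant of $x^3-\tau x-\beta$ degenerates to the nonzero constant $\tau^3$ in characteristic three, obtain $N=\bigl(1+\eta_q(\tau)^n\bigr)q^n$ with $\eta_q$ the quadratic character, and conclude by M\"obius inversion via Equations~\eqref{eq:Moebius_compact} and~\eqref{eq:I+-q}. The additional details you spell out (the triple-root argument for $C=0$ and the computation $\sigma^{(q^n-1)/2}=(-1)^n$) are correct and merely left implicit in the paper.
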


\begin{proof}
Both quantities $A$ and $C$ of Lemma~\ref{lemma:odd} vanish.
We may deal with both cases together by taking $\tau\in\{1,\sigma\}$
(or $\tau$ any nonzero element of $\F_q$).
The polynomial $x^3-\tau x-\beta$ has discriminant $\Delta(\beta)=\tau^3$, hence independent of $\beta$.
The number $N$ of solutions in $\F_{q^n}\times\F_{q^n}$ of the equation $y^2-\tau^3=0$
(in the unknowns $y$ and $\beta$, with the latter not appearing)
is given by $N=\bigl(1+\eta_q(\tau)^n\bigr)q^n$, where $\eta_q$ is the quadratic character of $\F_q$.
Hence
\[
|\overline{U}(n,q)|=\frac{q^n+\eta_q(\tau)^n q^n}{3}
\]
according to Lemma~\ref{lemma:odd}, and M\"obius inversion yields
\[
|\mathcal{I}(x^3-\tau x,1,n,q)|
=
\frac{1}{3n_3}\bigl(
I(n/n_3,q^{n_3})+I(n/n_3,\eta_q(\tau) q^{n_3})
\bigr).
\]
The desired conclusions follow, using Equation~\eqref{eq:I+-q} in case $\tau=\sigma$.
\end{proof}

As in Remark~\ref{rem:I_empty}, the fact that $\mathcal{I}(x^3-\sigma x,1,n,q)$ is empty
when $n$ is odd in Theorem~\ref{thm:char_three_(3)}
is also due to $x^3-\sigma x$ inducing a permutation of $\F_{q^n}$ in that case.

\section{Results in characteristic two}\label{sec:char_two}

A classification of cubic rational expressions over finite fields of characteristic two, analogous to Theorem~\ref{thm:cubic_expr_finite},
was also obtained in~\cite{MatPiz:cubic-maps}.
However, because no cubic rational expression in characteristic two can have more than three ramification points (see~\cite[Section~4]{MatPiz:cubic-maps}),
in this case the classification covers all cubic rational expressions, with no restriction.

\begin{theorem}[\cite{MatPiz:cubic-maps}, Theorem~19]\label{thm:cubic_expr_finite_2}
Let $\F_q$ be a finite field of characteristic two,
and let $\sigma\in\F_q$ be an element of absolute trace $1$.
If $q$ is a square then let $\theta$ be a non-cube in $\F_q$.
Then any cubic rational expression $R(x)$ over $\F_q$  is equivalent to precisely one of the following:
\begin{itemize}
\item[(i)]
$x^3$, with ramification indices $(3,3)$;
\item[(ii)]
$\displaystyle
\frac{x^3+\sigma x+\sigma}{x^2+x+\sigma+1}$,
with ramification indices $(3,3)$;
\item[(iii)]
$x^3 + x^2$,
with ramification indices $(3,2)$;
\item[(iv)]
$(x^3+1)/x$
or, only in case $q$ is a square,
$(x^3+\theta)/x$ or $(x^3+\theta^2)/x$;
all these have ramification indices $(2)$;
\item[(v)]
$(x^3 + c)/(x+c)$,
with ramification indices $(2,2)$,
for a unique $c\in\F_q\setminus\F_2$;
\item[(vi)]
$\displaystyle
\frac{x^3 +bx^2 +\sigma x +(b+1)\sigma}
{x^2 +x +b+1+\sigma}$,
with ramification indices $(2,2)$,
for a unique $b\in\F_q\setminus\F_2$.
\end{itemize}
\end{theorem}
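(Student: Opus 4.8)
The plan is to use ramification data as the primary invariant and then to normalize within each ramification type by exploiting the two-sided M\"obius freedom. First I would note that equivalence $R\mapsto B\circ R\circ A$ acts on the source and target $\Proj^1$ by automorphisms, so it permutes ramification points and relabels branch values; the multiset of ramification indices is therefore an invariant of the class. To enumerate the possible types I would apply the Riemann--Hurwitz formula for the degree-three cover $\Proj^1\to\Proj^1$, namely $-2=3\cdot(-2)+\deg\mathfrak{d}$, so the different $\mathfrak{d}$ has degree $4$. Every cubic map in characteristic two is separable (since $2\nmid 3$), so each ramification point has index $2$ or $3$: a point of index $3$ is tame and contributes $2$ to $\deg\mathfrak{d}$, while a point of index $2$ is wildly ramified, giving a local Artin--Schreier extension whose different exponent is $m+1$ for an odd conductor $m\ge 1$, hence lies in $\{2,4,\dots\}$. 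Distributing the total $4$ among these contributions leaves exactly the four types $(3,3)$, $(3,2)$, $(2,2)$, and $(2)$, the last arising from a single wild point with different exponent $4$.

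Next, for each type I would pin the configuration down using the M\"obius group on either side: post-composition to fix the branch values and pre-composition to fix the ramification points. Type $(3,3)$, with its two tame points placed at $0$ and $\infty$, gives $x^3$ in the split case, while type $(3,2)$, with the tame point at $\infty$ and the wild point at $0$, gives $x^3+x^2$. For the wildly ramified types $(2,2)$ and $(2)$ the local Artin--Schreier structure forces an irreducible $t^2+t+\sigma$ to intervene, which is precisely why the representatives (ii), (v), (vi) are written using an element $\sigma$ of absolute trace $1$. The single-wild-point type $(2)$ normalizes to $(x^3+a)/x$, and the residual scaling $x\mapsto\lambda x$ rescales $a$ by a cube, so $a$ is relevant only modulo cubes.

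Finally I would extract the arithmetic invariants separating the $\F_q$-classes that coalesce over $\barF_q$, and verify that the list is exhaustive and irredundant. Within type $(3,3)$ the invariant is whether the two index-three points are individually $\F_q$-rational (yielding (i)) or form a Galois-conjugate pair over $\F_{q^2}$ (yielding (ii)); likewise types (v) and (vi) are separated by the splitting field of the two wild points, with the surviving modulus recorded by the unique $c\in\F_q\setminus\F_2$ or $b\in\F_q\setminus\F_2$. In type (iv) the cube class of the constant in $(x^3+a)/x$ is the only remaining datum, so when $3\mid q-1$ --- equivalently, since $q=2^k$, when $q$ is a square --- the single geometric class splits into three, indexed by a cube, a non-cube $\theta$, and $\theta^2$. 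I expect \emph{this last step to be the main obstacle}: cleanly separating the source-side from the target-side M\"obius action so as to read off which parameter values give equivalent maps, pinning down the moduli $c$ and $b$ uniquely in each class, and confirming that every cubic rational expression lands in exactly one of (i)--(vi) without overlap.
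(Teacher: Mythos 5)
You should first be aware that the paper you were given contains no proof of this statement: it is quoted verbatim from the companion paper \cite{MatPiz:cubic-maps} (Theorem~19), so your proposal can only be judged on its own terms and against what that paper's method visibly is --- explicit normalization and descent computations (note that the proof of Theorem~\ref{thm:char_two_(3,3)} here refers to an element $\tau$ with $\tau^2+\tau=\sigma$ ``as in the proof of \cite[Theorem~19]{MatPiz:cubic-maps}''). Your opening step is correct and complete, and is a genuinely cleaner route to the list of geometric types than brute-force computation: every cubic map in characteristic two is separable, Riemann--Hurwitz gives $\deg\mathfrak{d}=4$, a tame index-$3$ point contributes $2$, a wild index-$2$ point contributes an even different exponent $m+1\ge 2$ with $m$ odd, and distributing $4$ yields exactly the four types $(3,3)$, $(3,2)$, $(2,2)$, $(2)$ underlying cases (i)/(ii), (iii), (v)/(vi), (iv). Your closing observations on case (iv) are also sound: after placing the wild point and its branch value at $\infty$ the Wronskian computation forces the shape $(x^3+a)/x$, the residual freedom is scalings that rescale $a$ by cubes, and for $q=2^k$ one has $3\mid q-1$ if and only if $q$ is a square, giving one class or three exactly as stated.

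There are, however, two genuine problems. First, a conceptual misattribution: you assert that for ``the wildly ramified types $(2,2)$ and $(2)$ the local Artin--Schreier structure forces an irreducible $t^2+t+\sigma$ to intervene, which is precisely why the representatives (ii), (v), (vi) are written using $\sigma$.'' This is wrong on both counts: case (ii) has type $(3,3)$, so both of its ramification points are \emph{tame} (they are the roots of $x^2+x+\sigma$, as one checks from $g'h-gh'=(x^2+x+\sigma)^2$), while the wild cases (iv) and (v) contain no $\sigma$ at all. The element $\sigma$ of absolute trace $1$ is a \emph{descent} datum, not a wildness datum: it appears exactly in the two classes, (ii) and (vi), whose pair of ramification points is Galois-conjugate over $\F_{q^2}$. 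Your final paragraph states this rational-versus-conjugate dichotomy correctly, contradicting the justification you gave earlier. Second, the step you defer is in fact the theorem. Your normalization scheme (``post-compose to fix branch values, pre-compose to fix ramification points'') offers no mechanism explaining why the types $(3,3)$, $(3,2)$ and $(2)$ admit finitely many classes while type $(2,2)$ retains a genuine one-parameter modulus $c$ (resp.\ $b$), nor why distinct $c,b\in\F_q\setminus\F_2$ give inequivalent expressions; settling this requires computing the residual M\"obius stabilizer of the ramification and branch configuration (cut down further by the canonical unramified points in the branch fibers, as you implicitly did only in case (iv)) and comparing it with the coefficients surviving normalization, and in the non-split cases (ii) and (vi) it requires an explicit Galois-descent construction of the representatives --- exactly the computations with $\tau^2+\tau=\sigma$ that the cited proof performs. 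Even small exhaustiveness checks, such as why $c\in\F_q\setminus\F_2$ (at $c=0$ or $c=1$ the expression $(x^3+c)/(x+c)$ degenerates to degree two), belong to this missing part. So the skeleton is right and partly more conceptual than the source, but as it stands the proposal proves the type list and case (iv), and leaves (ii), (v), (vi), uniqueness of the moduli, and irredundancy unproven.
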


The cubic expressions of cases~(i) and~(ii) are equivalent over $\F_{q^2}$.
The expressions of case~(vi) is equivalent to that of case~(v) with $c=1/b^2$.
If $q$ is a square then the three expressions of case~(iv) become equivalent over $\F_{q^3}$.
Thus, over the algebraic closure $\barF_q$ the above six cases collapse to three isolated cases and one parametric family,
as in~\cite[Theorem~10]{MatPiz:cubic-maps}.

In this section we will apply the general method of Section~\ref{sec:tools} to the calculation of
$|\mathcal{I}(g,h,n,q)|$
in each case of Theorem~\ref{thm:cubic_expr_finite_2}.
We will find explicit formulas in cases~(i), (ii), (iii), but only an estimate in the remaining cases.

Case~(i) is covered by Theorem~\ref{thm:x^3}.
The following result deals with case~(ii), and produces exactly the same formulas as in Theorem~\ref{thm:two_ram_points}.

\begin{theorem}\label{thm:char_two_(3,3)}
Let $\F_q$ have characteristic two, and let $\sigma\in\F_q$ have absolute trace $1$.
If $q\equiv 1\pmod{3}$, then
\[
|\mathcal{I}(x^3+\sigma x+\sigma,x^2+x+\sigma+1,n,q)|
=\frac{1}{3n_3}\bigl(I(n/2n_3,q^{2n_3})-I(n/2n_3,1)\bigr).
\]
If $q\equiv -1\pmod{3}$, then
\[
|\mathcal{I}(x^3+\sigma x+\sigma,x^2+x+\sigma+1,n,q)|
=\frac{2}{3n_3}\bigl(I(n/n_3,q^{n_3})-I(n/n_3,-1)\bigr).
\]
\end{theorem}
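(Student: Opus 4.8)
The plan is to apply Lemma~\ref{lemma:odd} in its first formulation, through the quadratic resolvent, since the discriminant form is unavailable in characteristic two. Writing $g(x)=x^3+\sigma x+\sigma$ and $h(x)=x^2+x+\sigma+1$, I would first expand
$g(x)-\beta h(x)=x^3+\beta x^2+(\sigma+\beta)x+(\sigma+\beta\sigma+\beta)$
and feed its coefficients into the quadratic resolvent of Theorem~\ref{thm:Conrad}. A short computation (using $3=9=1$ and $6=0$ in characteristic two) should produce the pleasantly simple coefficient $s(\beta)=\beta^2+\beta+\sigma$ together with a quartic $t(\beta)$. The quantity $N$ of Lemma~\ref{lemma:odd} is then the number of $\F_{q^n}$-points of the affine curve $x^2+s(\beta)x+t(\beta)=0$.

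Next I would dispatch $A$ and $C$. The polynomial $h(x)=x^2+x+(\sigma+1)$ is separable, so $A\in\{0,2\}$ according to whether $\Tr_{\F_{q^n}/\F_2}(\sigma+1)$ vanishes; by transitivity of the trace this equals $\Tr_{\F_q/\F_2}\bigl(n(\sigma+1)\bigr)$, and the hypothesis $\Tr_{\F_q/\F_2}(\sigma)=1$ together with the identities $\Tr_{\F_q/\F_2}(1)\equiv\log_2 q\pmod 2$ and $q\equiv(-1)^{\log_2 q}\pmod 3$ should yield $A=1+(-\varepsilon)^n$, where $\varepsilon=\left(\frac{q}{3}\right)$. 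This translation between the characteristic-two trace condition and the residue of $q$ modulo $3$ is the recurring subtlety of the argument. For $C$, I would observe that $s(\beta)=0$ exactly at the two ramification points, the roots of $x^2+x+\sigma$, which lie in $\F_{q^2}$; since both have index three there are no fibres with a double and a simple root, so $B=0$ and $C$ equals the number of such $\beta$ in $\F_{q^n}$, namely $1+(-1)^n$.

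The crux is the evaluation of $N$. For fixed $\beta$ the equation $x^2+s(\beta)x+t(\beta)=0$ has one solution when $s(\beta)=0$ and, when $s(\beta)\neq0$, has $1+\psi\bigl(t(\beta)/s(\beta)^2\bigr)$ solutions, where $\psi(a)=(-1)^{\Tr_{\F_{q^n}/\F_2}(a)}$ is the canonical additive character. Hence $N=q^n+\sum_{s(\beta)\neq0}\psi\bigl(t(\beta)/s(\beta)^2\bigr)$, and I expect the main obstacle to be showing that this character sum collapses. The key is that the curve, being $\barF_q$-equivalent to the singular conic attached to $x^3$, should degenerate; concretely, I would look for a rational function $w(\beta)$ making $t(\beta)/s(\beta)^2=(\sigma+1)+w(\beta)^2+w(\beta)$. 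The candidate $w(\beta)=\bigl((\sigma+1)\beta+\sigma\bigr)/\bigl(\beta^2+\beta+\sigma\bigr)$ should work, and since $\Tr(w^2+w)=0$ for every admissible $\beta$, each summand reduces to $\psi(\sigma+1)=(-\varepsilon)^n$. The sum then telescopes to $\bigl(q^n-(1+(-1)^n)\bigr)(-\varepsilon)^n$, giving $N=q^n+(-\varepsilon q)^n-(-\varepsilon)^n-\varepsilon^n$.

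Finally I would assemble $|\overline{U}(n,q)|=(N+A-C)/3=\bigl(q^n+(-\varepsilon q)^n-\varepsilon^n-(-1)^n\bigr)/3$, which is identical to the value obtained in the proof of Theorem~\ref{thm:two_ram_points}. Applying M\"obius inversion through Equations~\eqref{eq:Moebius}, \eqref{eq:Moebius_compact} and~\eqref{eq:I+-q} exactly as there then reproduces the two stated formulas, explaining why case~(ii) yields the same answer as the odd-characteristic two-ramification-point case.
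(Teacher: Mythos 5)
Your proposal is correct, and its skeleton coincides with the paper's: both apply Lemma~\ref{lemma:odd} in the quadratic-resolvent form, obtain $A=1+(-\varepsilon)^n$ and $C=1+(-1)^n$ (your trace bookkeeping via $\Tr_{\F_{q^n}/\F_2}(\sigma+1)=n\bigl(1+\Tr_{\F_q/\F_2}(1)\bigr)\bmod 2$ and $\varepsilon=(-1)^{\log_2 q}$ is sound), and finish by the same M\"obius inversion as in Theorem~\ref{thm:two_ram_points}. The genuine divergence is at the crux, the evaluation of $N$. The paper factors the resolvent over an extension containing $\rho$ with $\rho^2+\rho=\sigma+1$ into two linear-in-$x$ factors, so the quartic curve becomes a union of two non-singular conics, defined over $\F_{q^n}$ or conjugate over it according to whether $\rho\in\F_{q^n}$, and counts points geometrically, subtracting the intersections at $\beta=\tau$ and $\beta=\tau+1$. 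You instead complete the Artin--Schreier square: your identity
\[
\frac{t(\beta)}{s(\beta)^2}=(\sigma+1)+w(\beta)^2+w(\beta),
\qquad
w(\beta)=\frac{(\sigma+1)\beta+\sigma}{\beta^2+\beta+\sigma},
\]
does hold against the actual resolvent data $s(\beta)=\beta^2+\beta+\sigma$ and $t(\beta)=(\sigma+1)\beta^4+(\sigma+1)\beta^3+(\sigma^2+\sigma+1)\beta^2+\sigma^2\beta+\sigma^3+\sigma^2$ (I verified the expansion in characteristic two), so every summand of your character sum equals $\psi(\sigma+1)=(-\varepsilon)^n$ and $N=q^n+\bigl(q^n-1-(-1)^n\bigr)(-\varepsilon)^n=q^n+(-\varepsilon q)^n-(-\varepsilon)^n-\varepsilon^n$, exactly the paper's value in all cases. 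The two methods are two faces of the same phenomenon, since the rationality of $\rho$ over $\F_{q^n}$ is precisely the condition $\psi(\sigma+1)=1$; yours buys a uniform one-line evaluation with no case analysis on conjugate components, at the price of having to produce $w(\beta)$ by inspection, while the paper's factorization keeps the geometry (two conics, matching the $\barF_q$-equivalence with $x^3$) visible. Two minor points: the roots of $\beta^2+\beta+\sigma$ are branch points (images of the ramification points), not ramification points; and your inference $B=0$ leans on the ramification indices $(3,3)$ quoted from Theorem~\ref{thm:cubic_expr_finite_2}, which is legitimate but can be replaced by the paper's direct observation that $g(x)-\beta h(x)$ has a triple root exactly when $\beta^2+\beta=\sigma$.
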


\begin{proof}
We prepare for an application of Lemma~\ref{lemma:odd} in the form with the quadratic resolvent.
The number $A$ of roots of $x^2+x+\sigma+1$ in $\F_{q^n}$
equals $2$ or $0$ according to whether the absolute trace of $\sigma+1$
as an element of $\F_{q^n}$ equals $0$ or $1$.
Consequently, we have
$A=1+(-\varepsilon)^n$
in terms of the Legendre symbol
$\varepsilon=\left(\frac{q}{3}\right)$.
The polynomial
$g(x)-\beta h(x)=x^3+\beta x^2+(\beta+\sigma)x+(\beta\sigma+\beta+\sigma)$,
as a polynomial in $x$, for a given value of $\beta$, has a triple root in $\F_{q^n}$
if and only if $\beta^2+\beta=\sigma$.
Hence we have $C=1+(-1)^n$.

The quadratic resolvent of $g(x)-\beta h(x)$ is
\[
x^2+(\beta^2+\beta+\sigma)x+
\bigl((\sigma+1)\beta^4+(\sigma+1)\beta^3+(\sigma^2+\sigma+1)\beta^2+\sigma^2\beta+(\sigma^3+\sigma^2)\bigr).
\]
Over any extension of $\F_q$ containing an element $\rho$ with $\rho^2+\rho=\sigma+1$,
the quadratic resolvent factorizes as
\[
\bigl(x+\rho\beta^2+\rho^2\beta+(\rho^3+1)\bigr)
\bigl(x+(\rho+1)\beta^2+(\rho^2+1)\beta+(\rho^3+\rho^2+\rho)\bigr).
\]
Equating this to zero yields the equation of the union of two non-singular conics,
which meet in the points
$(x,\beta)=(\tau^3,\tau)$, where $\tau^2+\tau=\sigma$
(hence $\tau$ is as in the proof of~\cite[Theorem~19]{MatPiz:cubic-maps}), and
$(x,\beta)=((\tau+1)^3,\tau+1)$.
Because $\sigma$ is an element of $\F_q$ with absolute trace $1$, the element $\tau$ belongs to $\F_{q^n}$
if and only if $n$ is even.

Furthermore, $\rho$ belongs to $\F_{q^n}$ unless $\varepsilon=1$ and $n$ is odd.
If $\rho\in\F_{q^n}$, then the number of solutions in $\F_{q^n}\times\F_{q^n}$ of the equation associated with the quadratic resolvent is given by
$N=2q^n-1-(-1)^n$,
having taken into account the possible intersections of the two (affine) conics.
However, if $\rho\not\in\F_{q^n}$, then the two conics are only defined over $\F_{q^{2n}}$ and they are conjugate over $\F_{q^n}$.
Because their two intersection points described earlier do not have coordinates in $\F_{q^n}$
in this case, we find $N=0$.
In a single formula encompassing all cases, we have
$N=q^n+(-\varepsilon q)^n-(-\varepsilon)^n-\varepsilon^n$.

Noting that $N$, $A$ and $C$ are given by identical expressions as in the proof of Theorem~\ref{thm:two_ram_points},
the conclusion follows exactly as in that proof.
\end{proof}

Once again, the fact that Theorem~\ref{thm:char_two_(3,3)}
reports no irreducible polynomials if $q\equiv 1\pmod{3}$ and $n$ is odd
can be explained as in Remark~\ref{rem:I_empty},
because $R(x)$ induces a permutation of $\F_{q^n}$ in that case.

The following result deals with case~(iii) of Theorem~\ref{thm:cubic_expr_finite_2}.

\begin{theorem}\label{thm:char_two_(3,2)}
If $q$ is a power of two, then
\[
|\mathcal{I}(x^3+x^2,1,n,q)|
=\frac{1}{3n_3}\bigl(I(n/n_3,q^{n_3})-I(n/n_3,\varepsilon)\bigr),
\]
where $\varepsilon=\left(\frac{q}{3}\right)$.
\end{theorem}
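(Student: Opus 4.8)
The plan is to apply Lemma~\ref{lemma:odd} in the form involving the quadratic resolvent, since the characteristic is two. Taking $g(x)=x^3+x^2$ and $h(x)=1$, we have $A=0$ because $h(x)$ has no roots. Also $C=0$: in characteristic two a cubic $x^3+x^2+\beta$ with a triple root would have to equal $(x+r)^3=x^3+rx^2+r^2x+r^3$, forcing simultaneously $r=1$ and $r^2=0$, which is impossible. Thus $|\overline{U}(n,q)|=N/3$, and everything reduces to computing $N$.

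First I would compute the quadratic resolvent of $g(x)-\beta h(x)=x^3+x^2+\beta$ (recall $-\beta=\beta$ here) using the formula in Theorem~\ref{thm:Conrad}. With $a=1$, $b=0$, $c=\beta$ one gets, after reducing the integer coefficients modulo two, the resolvent $x^2+\beta x+(\beta^2+\beta)$. Hence $N$ is the number of pairs $(x,\beta)\in\F_{q^n}\times\F_{q^n}$ satisfying $x^2+\beta x+\beta^2+\beta=0$.

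The main computation is to count these solutions by fixing $\beta$ and solving for $x$. For $\beta=0$ the equation becomes $x^2=0$, contributing the single solution $(0,0)$. For $\beta\neq 0$, dividing by $\beta^2$ and setting $u=x/\beta$ turns the equation into $u^2+u+(1+1/\beta)=0$, which has two solutions in $\F_{q^n}$ when $\Tr(1+1/\beta)=0$ and none otherwise, where $\Tr$ is the absolute trace $\F_{q^n}\to\F_2$. Since the trace is $\F_2$-linear and surjective, exactly $q^n/2$ elements of $\F_{q^n}$ have any prescribed trace; counting the $\beta\neq 0$ for which $\Tr(1/\beta)=\Tr(1)$, and being careful to exclude $\beta=0$, then yields $N=q^n-1$ when $\Tr(1)=0$ and $N=q^n+1$ when $\Tr(1)=1$.

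It remains to express this dichotomy through $\varepsilon=\left(\frac{q}{3}\right)$. Writing $q=2^s$, the absolute trace of $1$ in $\F_{q^n}=\F_{2^{sn}}$ equals $sn\bmod 2$, while $\varepsilon=(2/3)^s=(-1)^s$ because $2$ is a nonsquare modulo $3$; hence $\varepsilon^n=(-1)^{sn}$ and the condition $\Tr(1)=1$ is precisely $\varepsilon^n=-1$. In both cases this gives $N=q^n-\varepsilon^n$, so $|\overline{U}(n,q)|=(q^n-\varepsilon^n)/3$, which is identical to the value obtained in the proof of Theorem~\ref{thm:three_ram_points}. The stated formula then follows by the same M\"obius inversion, via Equations~\eqref{eq:Moebius} and~\eqref{eq:Moebius_compact}, noting that $\varepsilon^{n_3}=\varepsilon$ since $n_3$ is odd. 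The one genuinely delicate step is the solution count for $N$ together with the identification of $\Tr_{\F_{q^n}/\F_2}(1)$ with $\varepsilon^n$; everything else is routine.
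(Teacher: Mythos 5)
Your proof is correct and follows essentially the same route as the paper: the quadratic-resolvent form of Lemma~\ref{lemma:odd} with $A=C=0$, the resolvent $x^2+\beta x+\beta^2+\beta$, the count $N=q^n-\varepsilon^n$, and M\"obius inversion via Equations~\eqref{eq:Moebius} and~\eqref{eq:Moebius_compact}. The only (harmless) difference is that you count the affine solutions by an Artin--Schreier trace argument fibered over $\beta$, whereas the paper observes that the equation defines a non-singular conic with $q^n+1$ projective points over $\F_{q^n}$ and subtracts the $1+\varepsilon^n$ points at infinity.
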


\begin{proof}
Both quantities $A$ and $C$ of Lemma~\ref{lemma:odd} vanish.
The quadratic resolvent of $x^3+x^2-\beta$
equals $x^2+\beta x+\beta^2+\beta$.
Equating this polynomial to zero yields the equation of a non-singular conic.
Over $\F_{q^n}$ the conic has $q^n+1$ projective points, of which $1+\varepsilon^n$ points at infinity.
Consequently, $N=q^n-\varepsilon^n$, hence
$|\overline{U}(n,q)|=(q^n-\varepsilon^n)/3$
according to Lemma~\ref{lemma:odd}, and the conclusion follows by M\"obius inversion as usual.
\end{proof}

In the remaining cases of Theorem~\ref{thm:cubic_expr_finite_2}, the method of Section~\ref{sec:tools} leads to counting
points on a certain curve of genus one, and hence we will only lead to an estimate for $|\mathcal{I}(g,h,n,q)|$.
We do that in the following result, which is analogous to the case of $R(x)$ with four ramification points in odd characteristic,
Theorem~\ref{thm:four_ram_points}.

\begin{theorem}\label{thm:Weil_char_2}
Let $R(x)=g(x)/h(x)$ be a cubic rational expression, over $\F_q$ of characteristic two, with no ramification point
having ramification index $3$.
Then $|\mathcal{I}(g,h,n,q)|= q^n/3n + O(q^{n/2}/n)$.
\end{theorem}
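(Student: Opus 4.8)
The plan is to follow the template of Theorem~\ref{thm:four_ram_points}, replacing the Kummer cover $y^2=\Delta(\beta)$ of odd characteristic by the quadratic resolvent curve, which is the only form of Lemma~\ref{lemma:odd} available in characteristic two. First I would apply Lemma~\ref{lemma:odd} in its quadratic-resolvent formulation. The hypothesis that no ramification point of $R(x)$ has index three means that $g(x)-\beta h(x)$ never acquires a triple root, so $C=0$; and $A$, being the number of roots of $h(x)$, is at most $\deg h\le 2$, hence $O(1)$. Thus $|\overline{U}(n,q)|=(N+O(1))/3$, and everything reduces to estimating $N$, the number of $\F_{q^n}$-rational points of the affine plane curve
\[
X\colon\quad x^2+s(\beta)x+t(\beta)=0,
\]
where $x^2+s(\beta)x+t(\beta)$ is the quadratic resolvent of $g(x)-\beta h(x)$. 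Since the coefficients of $g(x)-\beta h(x)$ are affine-linear in $\beta$, we have $\deg_\beta s\le 2$ and $\deg_\beta t\le 4$, so $X$ has total degree at most four, exactly as $y^2=\Delta(\beta)$ does in Theorem~\ref{thm:four_ram_points}.

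The conceptual heart of the argument is to show that $X$ is absolutely irreducible. By Theorem~\ref{thm:Conrad}, $X$ is geometrically reducible precisely when the quadratic resolvent splits over $\barF_q(\beta)$, that is, when the Galois group of $g(x)-\beta h(x)$ over $\barF_q(\beta)$ is contained in $A_3$. Because $R(x)$ has degree coprime to the characteristic it is a separable cover, so this geometric monodromy group is transitive and therefore either $\Z/3$ or $S_3$; it is contained in $A_3$ exactly when it equals $\Z/3$, i.e.\ when $R(x)$ is a cyclic degree-three cover of the line. Such a cover is totally ramified, with ramification index three, over each of its branch points, whereas the hypothesis forbids any ramification point of index three (and a nonconstant degree-three map must ramify somewhere). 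Hence $R(x)$ is not cyclic, the monodromy is the full $S_3$, and $X$ is absolutely irreducible. This is the exact structural reason the present cases differ from cases~(i) and~(ii) of Theorem~\ref{thm:cubic_expr_finite_2}, whose resolvents split into two components; it also identifies $X$ as the characteristic-two Artin--Schreier analogue (in the form $u^2+u=t(\beta)/s(\beta)^2$) of the curve $y^2=\Delta(\beta)$, which one computes to have genus one.

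With absolute irreducibility in hand, the estimate follows as in Theorem~\ref{thm:four_ram_points}. The curve $X$ has bounded degree, hence bounded arithmetic genus $(d-1)(d-2)/2\le 3$, so the Hasse--Weil bound, applied through the Aubry--Perret estimate \cite[Corollary~2.5]{Aubry-Perret} for possibly singular models, gives $|N-q^n|\le 6\sqrt{q^n}$ (and the sharper $2\sqrt{q^n}$ when the projective model is smooth of genus one). Therefore $|\overline{U}(n,q)|=q^n/3+O(q^{n/2})$, and feeding this into the M\"obius inversion of Equation~\eqref{eq:Moebius}, where the leading divisor $d=1$ produces $q^n/3n$ and the remaining divisors $d\ge 2$ contribute only $O(q^{n/2}/n)$, yields $|\mathcal{I}(g,h,n,q)|=q^n/3n+O(q^{n/2}/n)$.

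The main obstacle is entirely geometric and specific to characteristic two: establishing the absolute irreducibility of $X$ and, if one wishes to assert the precise genus-one statement, carrying out the Artin--Schreier genus computation in the presence of wild ramification (the ramification index two equals the characteristic). This is genuinely more delicate than the Kummer computation underlying Theorem~\ref{thm:four_ram_points}. For the asymptotic estimate itself, however, only the absolute irreducibility and the bounded degree of $X$ are needed, so the wild-ramification genus calculation can be bypassed. As a fallback one may instead invoke Lemma~\ref{lemma:invariance} to reduce to the explicit representatives of cases~(iv), (v) and~(vi) of Theorem~\ref{thm:cubic_expr_finite_2} and verify irreducibility of the corresponding resolvent curves directly.
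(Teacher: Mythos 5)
Your argument is correct, but it reaches the crucial step --- absolute irreducibility of the resolvent curve --- by a genuinely different route from the paper's. The paper's proof is exactly your stated fallback: it invokes the classification of Theorem~\ref{thm:cubic_expr_finite_2} (via Lemma~\ref{lemma:invariance}) to reduce to the explicit representatives of cases~(iv), (v) and~(vi), then checks each resolvent curve directly --- a non-singular genus-one curve in cases~(iv) and~(v), giving the Hasse--Weil bound $|N-q^n|\le 2\sqrt{q^n}$, and in case~(vi) a singular absolutely irreducible quartic, verified ``with a little help from any computer algebra system'', giving the Aubry--Perret bound $|N-q^n|\le 6\sqrt{q^n}$. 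You instead prove irreducibility uniformly and classification-free: a transitive subgroup of $S_3$ lying in $A_3$ is $\Z/3$, so geometric reducibility of the resolvent would make $\barF_q(x)/\barF_q(\beta)$ a cyclic cubic cover, which (being tame, as $3$ is odd) is by Riemann--Hurwitz totally ramified of index three over its two branch points, contradicting the hypothesis; since the resolvent is monic in $x$, Gauss's lemma upgrades irreducibility over $\barF_q(\beta)$ to absolute irreducibility of the plane curve, and the degree bound $\le 4$ plus \cite[Corollary~2.5]{Aubry-Perret} finishes as in Theorem~\ref{thm:four_ram_points}. The paper's route buys explicit equations and the sharper constant in the smooth cases; yours applies verbatim to any cubic expression satisfying the hypothesis, avoids the computer-algebra check, and explains structurally why the resolvents in cases~(i) and~(ii) (ramification indices $(3,3)$, the cyclic case over $\barF_q$) are precisely the ones that split into two conics. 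One small correction: transitivity of the geometric monodromy follows from irreducibility of $g(x)-\beta h(x)$ over $\barF_q(\beta)$ --- it is, up to a unit, the minimal polynomial of $x$ over $\barF_q(R(x))$, an extension of degree three --- not from separability alone, though separability (degree prime to the characteristic) is indeed what makes the Galois formalism and Theorem~\ref{thm:Conrad} applicable; note also that your $S_3$ conclusion automatically excludes the degenerate possibility $s(\beta)\equiv 0$, since that would force a repeated resolvent root in $\barF_q(\beta)$ and hence reducibility, so the curve you count points on is genuinely a reduced plane curve.
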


\begin{proof}
According to Theorem~\ref{thm:cubic_expr_finite_2}, the cubic rational expression $R(x)$ is equivalent, over $\F_{q}$, to one of those in cases~(iv), (v) or~(vi).
We consider each case in turn.

In case~(iv) we have $g(x)/h(x)=(x^3+c)/x$ with some nonzero $c\in\F_q$.
Setting the quadratic resolvent of
$g(x)-\beta h(x)$ equal to zero we find the equation
$x^2+cx=\beta^3+c^2$.
The projective version of the curve given by this equation is a non-singular curve of genus one.
Similarly to the proof of Theorem~\ref{thm:four_ram_points}, the Hasse-Weil bound applies and gives
\[
|N-q^n|\le 2\sqrt{q^n}.
\]
Together with Equation~\eqref{eq:Moebius} this yields $|\mathcal{I}(g,h,n,q)|= q^n/3n + O(q^{n/2}/n)$, as desired.

In case~(v), setting the quadratic resolvent of
$g(x)-\beta h(x)$ equal to zero we find the equation
\[
x^2+cx\beta+cx=\beta^3+c^2\beta^2+c^2,
\]
with $c\in\F_q\setminus\F_2$.
After checking that the projective version of the curve is a non-singular curve of genus one, one concludes as in the previous case.

Finally, in case~(vi), the quadratic resolvent leads to an equation of the form
\[
x^2+(\beta^2+\beta+\sigma)x=u(\beta),
\]
for a certain quartic polynomial $u(\beta)$ with leading coefficient $b+1+\sigma$,
where $b\in\F_q\setminus\F_2$.
Calculations here are more involved than in previous cases, but with a little help from any computer algebra system
one can show that the projective version of this equation represents an absolutely irreducible quartic curve,
with a singularity in the point at infinity.
As in the proof of Theorem~\ref{thm:four_ram_points}, one has
\[
|N-q^n|\le 6\sqrt{q^n},
\]
and again the desired conclusion follows after applying M\"obius inversion.
\end{proof}

\section{Transformation shift registers}\label{sec:TSR}

Linear feedback shift registers are used to generate sequences over a finite field in a range of applications,
from pseudo-random number generation to the design of stream ciphers, see~\cite[Chapter~8, Section~1]{LN}.
Transformation shift registers (TSRs)
can be considered a generalization of linear feedback shift registers.
We refer the reader to~\cite[Section~3]{Cohen+:TSR} for an introduction to TFRs,
and focus here on the connection between our results and counting TSRs.

There is a notion of irreducible TSR, and following notation in~\cite{Cohen+:TSR}
we let $\TSRI(m,n,q)$ denote the set of irreducible TSRs of order $n$ over $\F_{q^m}$.
In~\cite{Ram:shift_registers} Ram related the cardinality of $\TSRI(m,n,q)$ to counting irreducible polynomials of the form $f_R(x)$,
with $R(x)$ ranging over rational expressions of degree $n$ of a special form.
In the present paper's notation, and according to a further development in~\cite[Theorem~3.2]{Cohen+:TSR}, we have
\begin{equation}\label{eq:TSRI}
|\TSRI(m,n,q)|=
\frac{|\GL_m(\F_q)|}{q^m-1}
\sum_{h(x)}|\mathcal{I}(x^n,h(x),m,q)|,
\end{equation}
where the summation $\sum_{h(x)}$ runs over all $h(x)\in\F_q[x]$
having degree less than $n$ and satisfying $h(0)=1$.
Here $|\GL_m(\F_q)|=\prod_{i=0}^{m-1}(q^m-q^i)$ is the order of the general linear group.
This connection, together with Ahmadi's evaluation, in~\cite{Ahmadi:Carlitz}, of $|\mathcal{I}(g(x),h(x),m,q)|$ for the case where $g(x)/h(x)$ is quadratic,
was then used in~\cite[Theorem~3.4]{Cohen+:TSR} to obtain a new derivation of an explicit formula for $|\TSRI(m,2,q)|$,
originally found by Ram~\cite{Ram:shift_registers} by a different method.
We use a similar method to recover the formulas for $|\TSRI(m,3,q)|$ obtained by Jiang and Yang in~\cite{Jiang-Yang}.

Start with noting that if $h(x)$ has degree less than $n$ and satisfies $h(0)=1$ as above,
then $g(x)=(1/x)\circ\bigl(x^n/h(x)\bigr)\circ(1/x)$ is a monic polynomial of degree $n$ and with $g(0)=0$.
According Section~\ref{sec:equivalence} we then have
\[
|\mathcal{I}(x^n,h(x),m,q)|=|\mathcal{I}(g(x),1,m,q)|.
\]
In particular, when $n=3$ we have
\[
|\TSRI(m,3,q)|=
\frac{|\GL_m(\F_q)|}{q^m-1}
\sum_{a,b\in\F_q}|\mathcal{I}(x^3+ax^2+bx,1,m,q)|,
\]
which we now in a position to compute using our results of the previous sections.

\begin{theorem}\label{thm:TSRI}
If $q$ is not a power of three, then
\[
|\TSRI(m,3,q)|=
\frac{q|\GL_m(\F_q)|}{3m(q^m-1)}
\sum_{d\mid n,\ 3\nmid d}
\mu(d)\bigl(q+\varepsilon^{m/d}\bigr)\bigl(q^{m/d}-\varepsilon^{m/d}\bigr).
\]
If $q$ is a power of $3$, then
\[
|\TSRI(m,3,q)|=
\frac{(q-1)|\GL_m(\F_q)|}{3m(q^m-1)}
\sum_{d\mid n,\ 3\nmid d}
\mu(d)\left(q+\frac{3+(-1)^{m/d}}{2}\right)q^{m/d}.
\]
\end{theorem}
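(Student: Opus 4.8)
The plan is to evaluate the inner sum $\sum_{a,b\in\F_q}|\mathcal{I}(x^3+ax^2+bx,1,m,q)|$ by classifying the cubic polynomials $R_{a,b}(x)=x^3+ax^2+bx$ up to the equivalence of Section~\ref{sec:equivalence} and applying the explicit formulas already obtained in the preceding sections. The crucial structural observation is that each $R_{a,b}$, being a genuine cubic polynomial (so $h(x)=1$), has $\infty$ as a ramification point of index three; since its derivative has degree at most two, $R_{a,b}$ has \emph{at most three} ramification points, and moreover always possesses a ramification point of index three. Consequently $R_{a,b}$ never falls into the estimate-only cases of Theorems~\ref{thm:four_ram_points} or~\ref{thm:Weil_char_2}, so an exact formula is always available. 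Rather than combining the final formulas, I would work one level earlier, with the quantities $|\overline{U}(k,q)|$: since $|U(k,q)|=k\,|\mathcal{I}(g,h,k,q)|$ is an equivalence invariant by Lemma~\ref{lemma:invariance}, and $|\overline{U}(k,q)|=\sum_{d\mid k,\,3\nmid d}|U(k/d,q)|$, the cardinality $|\overline{U}(k,q)|$ is itself invariant under equivalence. This lets me replace each $R_{a,b}$ by a representative, sum the resulting $|\overline{U}_{a,b}|$ over $(a,b)$ in closed form, and perform a single M\"obius inversion~\eqref{eq:Moebius} at the very end.

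When $q$ is not a power of three I would treat characteristics $\charac\ge5$ and $\charac=2$ in parallel. Completing the cube (translating $x$ to remove the quadratic term) shows $R_{a,b}$ is equivalent to $x^3+\lambda x$ for a suitable $\lambda$, namely $\lambda=b-a^2/3$ in odd characteristic and $\lambda=a^2+b$ in characteristic two. For the unique $b$ making $\lambda=0$ (that is, for $q$ pairs $(a,b)$) the expression is equivalent to $x^3$, with two ramification points; for the remaining $q^2-q$ pairs it is equivalent to a three-ramification expression: $x^3-\tau x$ with $\tau\ne0$ when $\charac\ge5$ (Theorem~\ref{thm:three_ram_points}, whose count is the same for every nonzero $\tau$), and $x^3+x\sim x^3+x^2$, i.e.\ case~(iii), when $\charac=2$ (via $x\mapsto x+1$). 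Reading off the proofs of Theorems~\ref{thm:x^3}, \ref{thm:three_ram_points} and~\ref{thm:char_two_(3,2)}, the values of $|\overline{U}(k,q)|$ are $(1+\varepsilon^k)(q^k-1)/3$ for the $x^3$ class and $(q^k-\varepsilon^k)/3$ for the three-ramification class, \emph{identically} in both characteristics, where $\varepsilon=\left(\frac{q}{3}\right)$. Summing over the $(a,b)$ then gives
\[
\sum_{a,b}|\overline{U}_{a,b}(k,q)|
=q\cdot\frac{(1+\varepsilon^k)(q^k-1)}{3}
+(q^2-q)\cdot\frac{q^k-\varepsilon^k}{3}
=\frac{q}{3}\,(q+\varepsilon^k)(q^k-\varepsilon^k),
\]
the last equality being the key algebraic simplification, which uses $\varepsilon^{2k}=1$. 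Substituting $k=m/d$ and applying~\eqref{eq:Moebius} yields the stated formula after multiplication by $|\GL_m(\F_q)|/(q^m-1)$.

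In characteristic three the derivative is $R'_{a,b}(x)=-ax+b$, so Theorem~\ref{thm:cubic_expr_finite_3} applies as follows. The $q^2-q$ pairs with $a\ne0$ give expressions with two ramification points, all equivalent to $x^3+x^2$, for which $|\overline{U}(k,q)|=q^k/3$ by Theorem~\ref{thm:char_three_(3,2)}. Among the pairs with $a=0$, the single pair $(0,0)$ gives the inseparable $x^3$ with empty $\mathcal{I}$, while each $b\ne0$ gives $x^3+bx=x^3-\tau x$ with $\tau=-b$, having $\infty$ as its only ramification point; by the proof of Theorem~\ref{thm:char_three_(3)} one has $|\overline{U}(k,q)|=(1+\eta_q(\tau)^k)q^k/3$, where $\eta_q$ is the quadratic character of $\F_q$. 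The one genuinely new computation is the character sum over $b\ne0$, namely $\sum_{b\ne0}\eta_q(-b)^k=(q-1)\bigl(1+(-1)^k\bigr)/2$ (equal to $q-1$ for $k$ even and $0$ for $k$ odd). Collecting these contributions gives
\[
\sum_{a,b}|\overline{U}_{a,b}(k,q)|
=\frac{q(q-1)}{3}\,q^k
+\frac{q^k(q-1)}{3}\cdot\frac{3+(-1)^k}{2}
=\frac{q^k(q-1)}{3}\left(q+\frac{3+(-1)^k}{2}\right),
\]
and once more~\eqref{eq:Moebius} with $k=m/d$ produces the claimed expression.

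I expect the main obstacle to be the classification step rather than the final bookkeeping. Specifically, one must verify that every $R_{a,b}$ really is equivalent to the indicated representative, most delicately that $x^3+x\sim x^3+x^2$ in characteristic two and that all pairs with $a\ne0$ in characteristic three collapse to the single class $x^3+x^2$, and, above all, that no $R_{a,b}$ has the ramification profile of an estimate-only case (this is where the index-three ramification at $\infty$ is essential, ruling out cases~(iv)--(vi) of Theorem~\ref{thm:cubic_expr_finite_2} and the four-ramification case of Theorem~\ref{thm:four_ram_points}). One should also confirm that the $\lambda=0$ expressions land in the split class $x^3$ rather than its twist (case~(ii) of Theorem~\ref{thm:cubic_expr_finite_2}), which is only relevant because the two classes have different counts. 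The remaining work, namely the two factorizations displayed above and the interchange of the finite sum over $(a,b)$ with the M\"obius sum, is routine.
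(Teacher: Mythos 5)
Your proposal is correct and takes essentially the same route as the paper: the identical classification of $x^3+ax^2+bx$ up to equivalence (the class of $x^3$ for $q$ pairs, the class of $x^3-\tau x$, resp.\ $x^3+x^2$ in characteristic two, for the remaining $q^2-q$ pairs, and the three characteristic-three classes with multiplicities $q^2-q$, $\frac{q-1}{2}$, $\frac{q-1}{2}$), drawing on the same ingredient results (Theorems~\ref{thm:x^3}, \ref{thm:three_ram_points}, \ref{thm:char_two_(3,2)}, \ref{thm:char_three_(3,2)}, \ref{thm:char_three_(3)}), with your aggregation of the $|\overline{U}(k,q)|$ values followed by a single M\"obius inversion being only a cosmetic reordering of the paper's summation of the final formulas. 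The sole point to patch is your justification that $|\overline{U}(k,q)|$ is an equivalence invariant, since Lemma~\ref{lemma:invariance} is stated only for $n>1$ and so does not cover the $|U(1,q)|$ terms in your decomposition; this is harmless here because the equivalences you actually use are affine substitutions in source and target, under which $g(x)-\beta h(x)$ transforms transparently and invariance of $\overline{U}(k,q)$ holds for every $k\ge 1$ (alternatively, note $\overline{U}(k,q)$ is the complement of the image of $R$ on $\Proj^1(\F_{q^k})$, which is visibly preserved).
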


\begin{proof}
Suppose first that the characteristic of $\F_q$ is at least five.
The polynomial $x^3+ax^2+bx$
is equivalent to $x^3$ if $a^2=3b$,
and to $x^3-\tau x$ otherwise,
where $\tau$ is a nonzero element of $\F_q$.
Consequently,
\[
\sum_{a,b\in\F_q}|\mathcal{I}(x^3+ax^2+bx,1,m,q)|
=
q|\mathcal{I}(x^3,1,m,q)|
+(q^2-q)|\mathcal{I}(x^3-\tau x,1,m,q)|.
\]
According to Equation~\eqref{eq:TSRI}, using Theorem~\ref{thm:x^3} in its formulation of Equation~\eqref{eq:x^3},
and Theorem~\ref{thm:three_ram_points}, we find that
the above sum equals
\[
\frac{q}{3m_3}
\bigl(qI(m/m_3,q^{m_3})+I(m/m_3,\varepsilon q^{m_3})-qI(m/m_3,\varepsilon)-I(m/m_3,1)\bigr).
\]
This can be expanded as
\[
\frac{q}{3m}\sum_{d\mid n,\ 3\nmid d}
\mu(d)\bigl(q+\varepsilon^{m/d}\bigr)\bigl(q^{m/d}-\varepsilon^{m/d}\bigr),
\]
and leads to the desired conclusion.

Now suppose $\F_q$ has characteristic two.
Then the polynomial $x^3+ax^2+bx$ is equivalent to $x^3$ if $a^2=b$,
and to $x^3+x^2$ otherwise.
It suffices now to replace
$\mathcal{I}(x^3-\tau x,1,m,q)$
with
$\mathcal{I}(x^3-x^2,1,m,q)$
in the above argument, invoke Theorem~\ref{thm:char_two_(3,2)}
instead of Theorem~\ref{thm:three_ram_points},
and the desired conclusion follows.

Finally, suppose $\F_q$ has characteristic three.
If if $a=b=0$ then the polynomial $x^3+ax^2+bx$
equals $x^3$, but because $f(x^3)$ cannot be irreducible this case gives no contribution to our count.
If $a\neq 0$, then $x^3+ax^2+bx$ is seen to be equivalent to $x^3+x^2$
(by a linear substitution in $x$ followed by subtraction of a constant).
If $a=0$ and $b\neq 0$, then $x^3+ax^2+bx$ is equivalent to either $x^3-x$ or $x^3-\sigma x$
according as to whether $-b$ is a square in $\F_q$, or not.
Here $\sigma$ is a nonsquare in $\F_q$ as in Theorem~\ref{thm:char_three_(3)}.
Consequently,
\[
\begin{aligned}
\sum_{a,b\in\F_q}|\mathcal{I}(x^3+ax^2+bx,1,m,q)|
&=
(q^2-q)|\mathcal{I}(x^3+x^2,1,m,q)|
\\&\quad
+\frac{q-1}{2}|\mathcal{I}(x^3-x,1,m,q)|
\\&\quad
+\frac{q-1}{2}|\mathcal{I}(x^3-\sigma x,1,m,q)|.
\end{aligned}
\]
According to Equation~\eqref{eq:TSRI}, and using Theorems~\ref{thm:char_three_(3,2)}
and~\ref{thm:char_three_(3)},
we find that the above sum equals
\[
\frac{q-1}{3m_3}
\biggl((q+1)I(m/m_3,q^{m_3})+\frac{1}{2}I(m/2m_3,q^{2m_3})\biggr).
\]
This leads to the desired conclusion after expanding the sums.
\end{proof}

As mentioned in~\cite{Jiang-Yang}, the results of Theorem~\ref{thm:TSRI}
can be expressed in terms of counting polynomials of various types if one so wishes,
akin to what we have done in previous sections.
The most pleasant expression is obtained when $q\equiv 1\pmod{3}$, where we find
\[
|\TSRI(m,3,q)|=
\frac{|\GL_m(\F_q)|}{q^m-1}
\cdot
\frac{q(q+1)}{3m_3}\bigl(I(m/m_3,q^{m_3})-I(m/m_3,1)\bigr).
\]

\bibliography{References}
\end{document}